\numberwithin{equation}{section}
\newtheorem{theorem}{Theorem}[section]
\newtheorem{lemma}[theorem]{Lemma}
\newtheorem{proposition}[theorem]{Proposition}
\theoremstyle{definition}
\newtheorem{remark}[theorem]{Remark}
\numberwithin{equation}{section}
\newcommand{\Z}{\mathbb{Z}}
\newcommand{\R}{\mathbb{R}}
\newcommand{\C}{\mathbb{C}}
\newcommand{\Rd}{\mathbb{R}^d}
\newcommand*\calC{\mathcal{C}}
\newcommand*\calF{\mathcal{F}}
\newcommand*\calN{\mathcal{N}}
\newcommand*\calR{\mathcal{R}}
\newcommand*\calL{\mathcal{L}}
\newcommand{\sgn}{{\rm det}}
\newcommand{\Hom}{\eta\in{\rm Hom}(W,\,\widehat{\mathbb Z}_2)}
\def\a{\alpha}
\def\b{\beta}
\def\ve{\varepsilon}
\begin{document}

\baselineskip=17pt

\title[Sharp estimates for heat kernels on Weyl chambers]{Genuinely sharp estimates for heat kernels on Weyl chambers}

\author[K. Stempak]{Krzysztof Stempak}
\address{55-093 Kie\l{}cz\'ow, Poland}        
\email{Krzysztof.Stempak@pwr.edu.pl}
\begin{abstract}  
We consider heat kernels on Weyl chambers corresponding to Laplacians subject to mixed Dirichlet-Neumann boundary conditions imposed on the boundary. 
Using purely analytic tools we prove genuinely sharp two-sided global estimates in the frameworks of orthogonal root systems and the dihedral root 
systems $I_2(3)$ and $I_2(4)$.
\end{abstract}

\subjclass[2020]{Primary 35K08; Secondary 58J35.}

\keywords{Heat kernel, Weyl chamber, finite reflection group, sharp Gaussian estimate.}

\maketitle

\section{Introduction and statement of results} \label{sec:in}
Heat kernel estimates attracted wide attention as useful tools of great importance in analysis. In the context of Euclidean domains, with the Dirichlet or 
Neumann Laplacians as self-adjoint operators, a variety of settings were considered, including bounded and unbounded domains, domains with smoothness 
regularity conditions (of different degrees) imposed on their boundaries, etc. 

To give a flavor of such estimates we mention that for any convex domain $U\subset \Rd$, the Neumann heat kernel $p_U^N(t,x,y)$ admits qualitatively 
sharp global upper and lower \textit{Gaussian bounds} of the type  
\begin{equation}\label{convex}
p_U^N(t,x,y)\simeq\simeq \frac 1{|B(x,\sqrt t)\cap U|}\exp(-c\|x-y\|^2/t), \qquad x,y\in U,\quad t>0,
\end{equation}
(see e.g. \cite[p.\,4]{GSC}), where $B(x,r)$ stands for the Euclidean ball of center $x$ and radius $r$; see the end of this section, where the meaning 
of the symbols ``$\simeq\simeq$'' and ``$\simeq$'' is explained. Similar estimates in more general \textsl{inner uniform domains} are also known; 
see \cite[Section 3]{GSC} or \cite[Theorem 5.4]{SC}. 

For the Dirichlet heat kernel $p_U^D(t,x,y)$ it was proved by Gyrya and Saloff-Coste, see \cite[Section 5]{GSC} or \cite[Theorem 5.7]{SC}, that for an 
unbounded \textit{inner uniform domain} $U$ (cf. \cite[Section 3]{GSC} for the definition) it holds
\begin{equation}\label{convex2}
p_U^D(t,x,y)\simeq\simeq \frac{h(x)h(y)}{\sqrt{V_{h^2}(x,\sqrt t)V_{h^2}(y,\sqrt t)}}\exp(-c d_U(x,y)^2/t), \qquad x,y\in U,\quad t>0;
\end{equation}
here $d_U(x,y)$ stands for the \textit{inner distance} in $U$ between $x$ and $y$ obtained by minimizing the length of curves joining $x$ and $y$ in $U$  
(thus, for convex $U$, $d_U(x,y)=\|x-y\|$), $h$ is the \textit{harmonic profile} of $U$ (called also the \textit{r\'eduite} of $U$, see 
\cite[Definition 5.5]{SC} or \cite[Section 4.1]{GSC}), and $V_{h^2}(x,r)=\int_{\{y\in U\colon d_U(x,y)<r\}} h^2$.   

The aim of this paper is to discuss global upper and lower bounds of heat kernels on Weyl chambers. For a chamber $C_+$, related to a root system $R$ in 
$\Rd$ with $W=W(R)$ as the corresponding Weyl group, these kernels, denoted $\{p_t^{\eta,C_+}\}_{t>0}$, are associated to nonnegative self-adjoint 
realizations $-\Delta^+_\eta$ in $L^2(C_+)$ of the Laplacian on $C_+$, subject to mixed Dirichlet-Neumann boundary conditions imposed on the 
facets of $C_+$. These conditions are determined by a homomorphism $\eta\in{\rm Hom}(W,\,\widehat{\mathbb Z}_2)$, where $\widehat{\mathbb Z}_2=\{1,-1\}$ 
with multiplication, and $\eta=$\,\textsl{det}  and $\eta\equiv1$ (the latter denoted in the sequel by \textsl{triv}) correspond to the Dirichlet and Neumann boundary conditions, respectively. As the main result we obtain genuinely sharp estimates of heat kernels in the framework of orthogonal 
root systems in $\Rd$, and in the context of the dihedral root systems $I_2(3)$ and $I_2(4)$  in $\R^2$.

Let 
$$
p_t^{(d)}(w)=(4\pi t)^{-d/2}\exp\big(-\|w\|^2/4t\big), \qquad w\in\Rd,
$$
be the $d$-dimensional Gauss-Weierstrass kernel. 

In the case of the orthogonal root system $R_k:=\{\pm e_{d-k+1},\ldots,\pm e_d\}$, $1\le k\le d$, and $\eta\in{\rm Hom}(W(R_k),\,\widehat{\mathbb Z}_2)\simeq  
\mathbb Z_2^k$ (group isomorphism), with $\R_{+,k}=\mathbb{R}^{d-k}\times(0,\infty)^k$ as the corresponding Weyl chamber, and with notation 
$J_\eta=\{1\le j \le k\colon \eta(j)=1\}$ (so that $J_{\textbf{0}}=\emptyset$ and $J_{\textbf{1}}=\{1,\ldots,k\}$, where $\textbf{0}$ stands for \textsl{triv} 
and $\textbf{1}$ stands for \textsl{det} in this setting), our first result reads as follows. 
\begin{proposition} \label{prop:1}
Let $1\le k\le d$ and $\eta\in \mathbb Z_2^k$. Then, uniformly in $t>0$ and $x,y\in \R_{+,k}$,
\begin{equation}\label{orto}
p_t^{\eta,\R_{+,k}}(x,y)\simeq  p_t^{(d)}(x-y)\prod_{j\in d-k+ J_\eta}\frac{x_{j}y_{j}}{x_{j}y_{j}+t}.
\end{equation}
\end{proposition}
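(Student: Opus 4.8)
My plan is to exploit the fact that, for the orthogonal root system $R_k$, the whole problem factorizes into one-dimensional pieces. The Weyl group $W(R_k)\cong\Z_2^k$ acts on $\Rd$ by independent sign changes of the coordinates $x_{d-k+1},\dots,x_d$; the chamber is the product $\R_{+,k}=\R^{d-k}\times(0,\infty)^k$, and its facets are the portions of the hyperplanes $\{x_j=0\}$ with $d-k+1\le j\le d$, on each of which $\eta$ prescribes either a Neumann or a Dirichlet condition. I would first record that, because $\Delta=\sum_{j=1}^d\partial_{x_j}^2$ separates and the chamber is a product, the heat semigroup generated by $-\Delta^+_\eta$ is the tensor product of $d-k$ copies of the heat semigroup of $-d^2/dx^2$ on $L^2(\R)$ and of $k$ heat semigroups of $-d^2/dx^2$ on $L^2(0,\infty)$, the $j$-th one carrying the Neumann boundary condition at $0$ when $j\notin d-k+J_\eta$ and the Dirichlet condition when $j\in d-k+J_\eta$; consequently the kernel $p_t^{\eta,\R_{+,k}}$ is the product of the corresponding one-dimensional kernels.

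Next I would substitute the classical one-dimensional kernels produced by the method of images: the free kernel on $\R$ is $p_t^{(1)}(a-b)$, the Neumann kernel on $(0,\infty)$ is $p_t^{(1)}(a-b)+p_t^{(1)}(a+b)$, and the Dirichlet kernel on $(0,\infty)$ is $p_t^{(1)}(a-b)-p_t^{(1)}(a+b)$. Using $p_t^{(d)}(x-y)=\prod_{j=1}^d p_t^{(1)}(x_j-y_j)$ together with the elementary identity $p_t^{(1)}(a+b)/p_t^{(1)}(a-b)=e^{-ab/t}$ (for $a,b>0$), multiplying everything out yields the exact formula
\begin{equation*}
p_t^{\eta,\R_{+,k}}(x,y)=p_t^{(d)}(x-y)\prod_{j\in d-k+J_\eta}\big(1-e^{-x_jy_j/t}\big)\prod_{\substack{d-k+1\le j\le d\\ j\notin d-k+J_\eta}}\big(1+e^{-x_jy_j/t}\big).
\end{equation*}

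It then remains to compare the two extra products with $1$ and with $\prod_{j\in d-k+J_\eta}x_jy_j/(x_jy_j+t)$, respectively. The Neumann factors are harmless, since $1\le 1+e^{-u}\le 2$ for all $u\ge 0$. For the Dirichlet factors I would use the elementary two-sided bound
\[
\frac{u}{1+u}\le 1-e^{-u}\le \frac{2u}{1+u},\qquad u\ge 0,
\]
whose left half is exactly $e^u\ge 1+u$ and whose right half follows from $1-e^{-u}\le\min\{u,1\}$; taking $u=x_jy_j/t$ gives $u/(1+u)=x_jy_j/(x_jy_j+t)$. Multiplying the at most $k$ resulting estimates then gives \eqref{orto}, with comparison constants depending only on $k$.

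I anticipate no real difficulty in this proposition: the analytic content is confined to the trivial inequality above, and everything else is bookkeeping. The one spot that genuinely needs care is matching the abstract definition of $-\Delta^+_\eta$ through the homomorphism $\eta$ and its image in $\Z_2^k$ with the concrete tensor decomposition, so as to be sure that the Dirichlet facets are precisely the $\{x_j=0\}$ with $j\in d-k+J_\eta$ (equivalently, that $\eta$ is odd under the reflection fixing that hyperplane exactly on those slots). Once this correspondence is pinned down, the orthogonal case is complete, and it serves as the simplest instance — and a template — for the more demanding dihedral computations for $I_2(3)$ and $I_2(4)$ that follow.
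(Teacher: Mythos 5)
Your proposal is correct and follows essentially the same route as the paper: both reduce to the product formula for $p_t^{\eta,\R_{+,k}}$ (the paper quotes it as \eqref{ort1} from the earlier work, you re-derive it via tensorization and the method of images) and then conclude factor by factor from the elementary equivalence $1-e^{-u}\simeq u/(u+1)$, with the Neumann factors trivially comparable to $1$. The one point you flag — matching $\eta$ to the Dirichlet/Neumann facets — is exactly what the cited formula \eqref{ort1} encodes, so nothing further is needed.
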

The proof of Proposition \ref{prop:1} is furnished in Section \ref{sec:ort}.

In the case of $I_2(3)$ the associated Weyl chamber is 
$$
C_+^{(3)}=\big\{\rho e^{i \theta}\colon \rho>0,\,\,\,\,|\theta|<\pi/6\}=\{(x_1,x_2)\colon |x_2|<\frac1{\sqrt3}x_1\big\},
$$
and for the corresponding Dirichlet kernel, now denoted $\{p_t^{\sgn,C_+^{(3)}}\}_{t>0}$, our first main result reads as follows.
\begin{theorem} \label{thm:1}
We have, uniformly in $x=(x_1,x_2)\in C_+^{(3)},\,y=(y_1,y_2)\in C_+^{(3)}$, and  $t>0$, 
\begin{align}\label{sgn1}
p_t^{\sgn,C_+^{(3)}}(x,y)\simeq &\frac{x_1y_1}{x_1y_1+t}\, \frac{(x_1-\sqrt3 x_2)(y_1-\sqrt3 y_2)}{(x_1-\sqrt3 x_2)(y_1-\sqrt3 y_2)+t}
\, \frac{(x_1+\sqrt3 x_2)(y_1+\sqrt3 y_2)}{(x_1+\sqrt3 x_2)(y_1+\sqrt3 y_2)+t}\times \nonumber\\
&p_t^{(2)}(x-y).
\end{align}
\end{theorem}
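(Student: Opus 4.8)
The plan is to realize the Dirichlet heat kernel on the dihedral chamber $C_+^{(3)}$ as an alternating sum of Gauss--Weierstrass kernels over the Weyl group $W(I_2(3))\cong S_3$, and then to extract the sharp two-sided bound from that finite sum by a careful asymptotic analysis near the two walls. Concretely, by the standard reflection principle (the method of images), since $C_+^{(3)}$ is a fundamental domain for the action of the order-$6$ group $W$ on $\R^2$ generated by the reflections in the two bounding lines $x_1-\sqrt3x_2=0$ and $x_1+\sqrt3x_2=0$, the Dirichlet kernel satisfies
\begin{equation}\label{image}
p_t^{\sgn,C_+^{(3)}}(x,y)=\sum_{w\in W}\sgn(w)\,p_t^{(2)}(x-wy),\qquad x,y\in C_+^{(3)}.
\end{equation}
First I would record this identity carefully (checking that each wall is fixed by a genuine reflection in $W$, so the Dirichlet condition is realized on all facets), and note that each summand is $(4\pi t)^{-1}\exp(-\|x-wy\|^2/4t)$, with $\|x-wy\|^2=\|x\|^2+\|y\|^2-2\langle x,wy\rangle$. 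Because $x,y$ lie in the chamber, $\langle x,y\rangle=\max_{w}\langle x,wy\rangle$, so the identity term dominates and the sum is positive; the task is to show the cancellation among the six Gaussians produces exactly the triple product of "wall factors" $\frac{uv}{uv+t}$ appearing on the right-hand side of \eqref{sgn1}, where $u,v$ run over the three "coordinates" $x_1,\,x_1\mp\sqrt3x_2$ (up to positive constants) of $x$ and $y$ relative to the three reflecting lines through the origin of $W$ (note $I_2(3)$ has three reflections, hence three walls up to the $W$-action, which is why three factors appear even though the chamber has only two facets).

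Next I would reduce the trigonometric/linear-algebra bookkeeping to a clean form. Writing $x=\rho e^{i\alpha}$, $y=re^{i\beta}$ with $|\alpha|,|\beta|<\pi/6$, the six points $wy$ are $re^{i(\pm\beta+2\pi j/3)}$, $j=0,1,2$, with sign $\sgn(w)=\pm1$ according to whether $w$ is a rotation or a reflection. Then \eqref{image} becomes
\begin{equation}\label{trig}
p_t^{\sgn,C_+^{(3)}}(x,y)=\frac{e^{-(\rho^2+r^2)/4t}}{4\pi t}\sum_{j=0}^{2}\Big(e^{\rho r\cos(\alpha-\beta-2\pi j/3)/2t}-e^{\rho r\cos(\alpha+\beta-2\pi j/3)/2t}\Big).
\end{equation}
I would factor out the leading exponential $e^{\rho r\cos(\alpha-\beta)/2t}$, which together with $e^{-(\rho^2+r^2)/4t}$ reconstitutes $p_t^{(2)}(x-y)$, and be left with showing that the remaining bracketed sum, call it $S(x,y,t)$, satisfies $S\simeq\prod(\text{wall factors})$ uniformly. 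Expanding the differences of exponentials using $e^{a}-e^{b}=e^{b}(e^{a-b}-1)$ and grouping the three reflection terms against the three rotation terms pairwise, each pair contributes a factor of the shape $1-e^{-2\rho r\cos(\cdot)\cos(\cdot)/2t}$; since for suitable pairings the product $\cos(\cdot)\cos(\cdot)$ is, up to a positive constant, $(x_1\mp\sqrt3 x_2)(y_1\mp\sqrt3 y_2)$ or $x_1y_1$ (this is where the half-angle identities do the work), one gets that $S$ is a sum/product built from the three quantities $\theta_i:=(\text{$i$-th wall coordinate of }x)\cdot(\text{$i$-th wall coordinate of }y)/t$. The endgame is the elementary inequality
$$\prod_{i=1}^{3}\frac{\theta_i}{1+\theta_i}\ \simeq\ \sum_{\emptyset\neq I\subseteq\{1,2,3\}}(-1)^{|I|+1}\prod_{i\in I}\big(1-e^{-c_I\theta_i}\big)\ \text{(the structure of }S\text{)},$$
valid uniformly for $\theta_i\ge0$, which is proved by splitting into the regimes where each $\theta_i$ is $\lesssim1$ or $\gtrsim1$ and in each regime checking that both sides are comparable to $\prod_{i:\theta_i\lesssim1}\theta_i$.

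The main obstacle I expect is the last step: controlling the inclusion--exclusion alternating sum $S$ uniformly down to the edges and corner of the chamber, i.e. showing it never vanishes prematurely and is genuinely comparable to the triple product rather than merely bounded above by it. The subtlety is that individual terms in $S$ are $O(1)$ while the true size can be as small as $\theta_1\theta_2\theta_3$ when $x,y$ are simultaneously close to all three walls (the corner regime), so crude triangle-inequality bounds lose the lower estimate; one must instead peel the factors one at a time, using at each stage that a difference $e^{a}-e^{b}$ with $0\le b\le a$ and $a-b$ small behaves like $(a-b)e^{b}$ while for $a-b$ large it behaves like $e^{a}$, and that these two regimes glue to give $\min(1,a-b)\simeq (a-b)/(1+(a-b))$. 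A secondary technical point is keeping the exponential prefactors aligned: after each factorization the residual exponential must be absorbable into $p_t^{(2)}(x-y)$ with only a change of the constant $c$ in the Gaussian (this is legitimate because $\cos(\alpha-\beta)-\cos(\text{other angles})\ge c(\text{something})$ by the chamber constraint, but the precise constant accounting has to be done with care to stay uniform as $t\to0$ and $t\to\infty$ simultaneously).
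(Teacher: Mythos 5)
Your setup is the same as the paper's: the images formula over $W(I_2(3))$, factoring out $p_t^{(2)}(x-y)$, and extracting one wall factor by pairing the identity with one reflection (this gives exactly $\tfrac12(1-e^{-2x_1y_1/2t})\simeq \frac{x_1y_1}{x_1y_1+t}$). But the core of your argument has a genuine gap. After that first extraction the remaining quantity is \emph{not} a product of the other two wall factors, nor an inclusion--exclusion sum in three independent variables: in the paper's variables $X=\tfrac{(x_1-\sqrt3 x_2)y_1}2$, $Y=\tfrac{(x_1+\sqrt3 x_2)y_1}2$, $s=\sqrt3\,y_2/y_1$ it is
$$
G(s,X,Y)=1-\frac{e^{sX}\sinh Y+e^{-sY}\sinh X}{\sinh(X+Y)},
$$
a genuinely alternating three-term expression whose positivity is already nontrivial (the paper cites a separate computation for it), and the three wall coordinates are linearly dependent, so the ``peeling'' you describe does not decouple them. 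Your proposed endgame inequality is in fact false as stated: with the constants $c_I$ equal, the right-hand side telescopes to $1-\prod_i e^{-c\theta_i}=1-e^{-c\sum\theta_i}\simeq \frac{\sum\theta_i}{1+\sum\theta_i}$, which is not comparable to $\prod_i\frac{\theta_i}{1+\theta_i}$ (take $\theta_1\simeq1$ and $\theta_2,\theta_3\to0$); and the actual sum $S$ does not have that inclusion--exclusion structure anyway, precisely because of the correlation between the wall variables. A smaller but related slip: dominance of the identity exponent does not by itself give positivity of an \emph{alternating} six-term sum (it does for the Neumann, non-alternating, sum); positivity here comes from the general theory, not from your domination remark.

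What is missing is exactly the hard part of the paper: a uniform proof that $G(s,X,Y)\simeq \frac{(1-s)X}{(1-s)X+1}\,\frac{(1+s)Y}{(1+s)Y+1}$ on $|s|<1$, $X,Y>0$. The paper achieves this by splitting into $X+Y<1$ and $X+Y\ge1$; in the first regime it expands in power series and exhibits the exact algebraic factorization $S(s,X,Y)=(1-s^2)XY(X+Y)P(s,X,Y)$ with $P$ continuous, then shows $P$ is bounded away from $0$ on the closed region by proving $P>0$ on the boundary via simple-root (first-derivative) arguments; in the second regime it uses subcases with derivative estimates of the form $\partial_X G\simeq 1-s$, monotonicity of $\sinh(sx)/\sinh x$, and explicit bounds away from $1$ for the ratio in the ``far'' regime. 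Your regime-splitting heuristic (small/large $\theta_i$) cannot substitute for this, because the lower bound near the walls and the corner is where the cancellation is most delicate and where crude two-regime gluing of $e^a-e^b$ loses the constant. To repair the proposal you would need to supply an argument of comparable strength for the lower bound of $G$ in the corner regime, not just the upper bound.
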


In the case of $I_2(4)$ the associated Weyl chamber is 
$$
C_+^{(4)}=\big\{\rho e^{i \theta}\colon \rho>0,\,\,\,\,0<\theta<\pi/4\}=\{(x_1,x_2)\colon 0<x_2<x_1\},
$$
and for the corresponding Dirichlet kernel $\{p_t^{\sgn,C_+^{(4)}}\}_{t>0}$, our second main result is as follows.
\begin{theorem} \label{thm:2}
We have, uniformly in $x=(x_1,x_2)\in C_+^{(4)},\,y=(y_1,y_2)\in C_+^{(4)}$, and  $t>0$, 
\begin{equation}\label{sgn}
p_t^{\sgn,C_+^{(4)}}(x,y)\simeq \frac{x_1y_1}{x_1y_1+t}\,\frac{x_2y_2}{x_2y_2+t}\,\frac{(x_1-x_2)(y_1-y_2)}{(x_1-x_2)(y_1-y_2)+t}
\frac{(x_1+x_2)(y_1+y_2)}{(x_1+x_2)(y_1+y_2)+t}\,p_t^{(2)}(x-y).
\end{equation}
\end{theorem}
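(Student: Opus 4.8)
The plan is to reduce Theorem~\ref{thm:2} to a scalar two-sided inequality via the method of images and then to settle that inequality by a regime analysis. Since $C_+^{(4)}$ is a fundamental chamber for the dihedral group $W=W(I_2(4))$ of order $8$, with its two facets on mirrors of $W$, the alternating sum $\sum_{w\in W}\sgn(w)\,p_t^{(2)}(x-wy)$ solves the heat equation in each variable, vanishes on $\partial C_+^{(4)}$ (the two terms interchanged by a wall reflection carry opposite signs) and reduces to $p_t^{(2)}(x-y)$ near the diagonal in the interior; hence it equals $p_t^{\sgn,C_+^{(4)}}(x,y)$. Evaluating the eight Gaussians in coordinates, factoring $p_t^{(2)}(x-y)$ out and using $1-e^{-2u}=2e^{-u}\sinh u$ together with the product-to-sum formulas, I would obtain
\begin{equation*}
p_t^{\sgn,C_+^{(4)}}(x,y)=p_t^{(2)}(x-y)\Big[(1-e^{-x_1y_1/t})(1-e^{-x_2y_2/t})-e^{-(x_1-x_2)(y_1-y_2)/2t}(1-e^{-x_1y_2/t})(1-e^{-x_2y_1/t})\Big],
\end{equation*}
which also equals $4\,p_t^{(2)}(x-y)\,e^{-(x_1y_1+x_2y_2)/2t}\big(\sinh\tfrac{x_1y_1}{2t}\sinh\tfrac{x_2y_2}{2t}-\sinh\tfrac{x_1y_2}{2t}\sinh\tfrac{x_2y_1}{2t}\big)$. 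Writing $\a=x_1y_1/t$, $\b=x_2y_2/t$, $\gamma=x_1y_2/t$, $\delta=x_2y_1/t$, $\mu=(x_1-x_2)(y_1-y_2)/t$, $\nu=(x_1+x_2)(y_1+y_2)/t$---which, for $x,y\in C_+^{(4)}$, are positive and, after relabelling so that $\gamma\ge\delta$, satisfy $\a\b=\gamma\delta$, $\mu=\a+\b-\gamma-\delta$, $\nu=\a+\b+\gamma+\delta$, $\a>\gamma\ge\delta>\b>0$ and $0<\mu\le(\sqrt\a-\sqrt\b)^2<\a<\nu$, $\b$ being least and $\nu$ greatest---Theorem~\ref{thm:2} becomes equivalent to the purely scalar estimate
\begin{equation*}
B:=(1-e^{-\a})(1-e^{-\b})-e^{-\mu/2}(1-e^{-\gamma})(1-e^{-\delta})\ \simeq\ \frac{\a}{1+\a}\cdot\frac{\b}{1+\b}\cdot\frac{\mu}{1+\mu}\cdot\frac{\nu}{1+\nu}=:\Psi,
\end{equation*}
to hold uniformly over all admissible quadruples.

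To prove this I would lean on the integral representation, with $\widetilde F(s,p)=\tfrac12\big(\cosh s-\cosh\sqrt{s^2-4p}\big)$ (so $\sinh u\sinh v=\widetilde F(u+v,uv)$, and $\widetilde F$ is entire in $s$),
\begin{equation*}
\sinh\tfrac\a2\sinh\tfrac\b2-\sinh\tfrac\gamma2\sinh\tfrac\delta2=\widetilde F\big(\tfrac{\a+\b}2,\tfrac{\a\b}4\big)-\widetilde F\big(\tfrac{\gamma+\delta}2,\tfrac{\a\b}4\big)=\int_{(\gamma+\delta)/2}^{(\a+\b)/2}\partial_s\widetilde F\big(s,\tfrac{\a\b}4\big)\,ds ,
\end{equation*}
whose integrand is positive and obeys $\partial_s\widetilde F(s,p)\simeq p\,s$ for bounded $s$. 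This makes $B$ manifestly positive and, together with $e^{-(\a+\b)/2}\le1$, gives the crude bound $B\le(1-e^{-\a})(1-e^{-\b})$, already sufficient for the upper estimate whenever $\mu,\nu\gtrsim1$. In the regime where all six quantities are bounded, $\partial_s\widetilde F(s,\tfrac{\a\b}4)\simeq\a\b\,s$ integrates to $\sinh\tfrac\a2\sinh\tfrac\b2-\sinh\tfrac\gamma2\sinh\tfrac\delta2\simeq\a\b\big((\a+\b)^2-(\gamma+\delta)^2\big)=\a\b\mu\nu$, whence $B\simeq\a\b\mu\nu\simeq\Psi$; the decisive point is that the lowest-order terms cancel exactly, which happens precisely because $\mu=(\a+\b)-(\gamma+\delta)$, the analytic trace of the Dirichlet vanishing at the walls. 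In the remaining regimes, where $\b\gtrsim1$ (forcing $\a,\gamma,\delta,\nu\gtrsim1$) or $\mu\gtrsim1$, the factor $e^{-\mu/2}$ and the two products $(1-e^{-\a})(1-e^{-\b})$, $(1-e^{-\gamma})(1-e^{-\delta})$ are controlled by elementary bounds, and $B$ reduces---once the constraint $\mu\le(\sqrt\a-\sqrt\b)^2$ is invoked---essentially to $1-e^{-\mu/2}\simeq\tfrac\mu{1+\mu}\simeq\Psi$. Splitting the ranges of $\b,\mu,\a,\nu$ at the threshold $1$ leaves finitely many cases, in each of which I would assemble matching two-sided bounds for $B$ and verify agreement with $\Psi$. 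Theorem~\ref{thm:1} would be handled the same way, with the dihedral group of order $6$ in place of $W(I_2(4))$.

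The main obstacle lies in the regimes in which several variables are small: there $(1-e^{-\a})(1-e^{-\b})$ and $e^{-\mu/2}(1-e^{-\gamma})(1-e^{-\delta})$ all but cancel---this is exactly the vanishing imposed by the Dirichlet condition---so the genuine size of $B$ is a second-order effect, and one must show with care, through the integral representation above, that it is comparable to the \emph{full} product $\a\b\mu\nu$ of the four wall factors, being neither smaller than this nor overwhelmed by a spurious exponentially small contribution from $e^{-\mu/2}$. A subsidiary, more clerical difficulty is to keep the implied constants uniform across the thresholds between regimes: the blunt bounds $(1-e^{-\gamma})(1-e^{-\delta})\le1$ and $e^{-\mu/2}\le1$ are too lossy just where $\a,\b,\mu$ would be simultaneously of order one, forcing one to exploit the constraint $\mu\le(\sqrt\a-\sqrt\b)^2$, which in particular keeps $\a$ away from $\b$ whenever $\mu\gtrsim1$.
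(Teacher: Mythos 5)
Your reduction is sound and is in substance the paper's own: the image formula, factoring out $p_t^{(2)}(x-y)$, and recasting \eqref{sgn} as the scalar equivalence $B\simeq\Psi$ in the variables $\alpha,\beta,\gamma,\delta,\mu,\nu$ with $\alpha\beta=\gamma\delta$ is exactly the paper's passage to the function $G(s,X,Y)$. Your handling of the regime where all parameters are bounded is also a genuinely nice alternative: writing $\sinh\frac\alpha2\sinh\frac\beta2-\sinh\frac\gamma2\sinh\frac\delta2=\int_{(\gamma+\delta)/2}^{(\alpha+\beta)/2}\partial_s\widetilde F\bigl(s,\tfrac{\alpha\beta}4\bigr)\,ds$, with positive integrand $\simeq\alpha\beta\,s$ for bounded $s$, exploits $\alpha\beta=\gamma\delta$ to replace the paper's Cauchy-product/power-series argument (Claim 1) and also recovers positivity (the paper's Lemma \ref{lem:bas} consequence) in one stroke.

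The gap lies in the large-parameter regimes, and it is twofold. First, your case list does not cover the parameter set: ``all six bounded'', ``$\beta\gtrsim1$'', ``$\mu\gtrsim1$'' omit the regime $\alpha\gg1$ with both $\beta\lesssim1$ and $\mu\lesssim1$ (take $x=(R,R^{-2})$, $y=(R,R-R^{-2})$, $t=1$, $R\to\infty$: each point hugs a different wall, far from the vertex). That omitted regime is precisely the paper's Claim 2, which is the most technical part of its proof (the estimate $\partial_XG\simeq1-s$, the function $L_X(s)$, its monotonicity and endpoint limits). Second, even in the regimes you do list, the lower bound does not follow from the ``elementary bounds'' you invoke. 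Writing $B=(1-e^{-\alpha})(1-e^{-\beta})\bigl[1-e^{-\mu/2}\rho\bigr]$ with $\rho=\frac{(1-e^{-\gamma})(1-e^{-\delta})}{(1-e^{-\alpha})(1-e^{-\beta})}$, one has $\rho\ge1$ (concavity of $u\mapsto\log(1-e^{-e^u})$ together with $\gamma\delta=\alpha\beta$, $\gamma+\delta\le\alpha+\beta$), so the upper bound is easy; but the lower bound requires $e^{-\mu/2}\rho\le1-c\,\frac{\mu}{1+\mu}$ uniformly, and $\rho$ genuinely competes with the damping: for instance, with $\beta\gtrsim1$ and $\mu$ small, the natural crude bound $\rho\le e^{\kappa\mu}$ with $\kappa=\frac{e^{-1}}{1-e^{-1}}>\frac12$ does not even give positivity of the bracket, although positivity is true. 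The constraint $\mu\le(\sqrt\alpha-\sqrt\beta)^2$ by itself does not resolve this; the paper needs the monotonicity of $\frac{\sinh sX}{\sinh X}$, the $\coth$ computation bounding the critical ratio by $\frac1{\sinh1}$ when $\mu\gtrsim1$ (Claim 3), and the derivative analysis in the intermediate regime (Claim 2). So, contrary to your closing assessment, the near-vertex cancellation is the part your method already controls; what is missing is a quantitative substitute for Claims 2 and 3 --- e.g.\ uniform two-sided bounds for $\partial_s\widetilde F(s,p)$ for large $s$, valid up to $4p\sim s^2$ --- and without it the proposal proves the theorem only when all the variables are bounded.
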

Since the last factor in the expression preceding $p_t^{(2)}(x-y)$ in \eqref{sgn} is comparable with the first factor, 
the right-hand side of  \eqref{sgn} can be reformulated to a slightly different form; see the comment following  Conjecture in
Section \ref{sec:app} concerning more general context to which the above case applies. We also remark that  the right-hand sides  of \eqref{sgn1} 
and \eqref{sgn} correctly reflect the behavior of $p_t^{{\rm det},C_+^{(3)}}(x,y)$ and $p_t^{{\rm det},C_+^{(4)}}(x,y)$ on the boundaries of 
$C_+^{(3)}$ and $C_+^{(4)}$, respectively. See Section \ref{sec:prel}, where the behavior of the $\eta$-heat kernels on the boundary 
of Weyl chambers is commented.

Apart from $\eta=\sgn$ in the case of Theorem \ref{thm:2}, similar results for heat kernels corresponding to two other nontrivial homomorphisms $\eta$ 
in the setting of $W(I_2(4))$ are contained in Propositions \ref{pro:N2} and \ref{pro:N1}. An example of a result concerning heat kernel estimates for 
the Laplacian subject to mixed Neuman-Dirichlet boundary conditions can be found, for instance, in  \cite[Section 1.7]{GSC}. 

Bounds of heat kernels in various settings were extensively investigated using both, probabilistic and analytic methods; see, for instance, \cite{Cou}, 
\cite{GSC}, \cite{SC}, \cite{V}, \cite{Z}, and references therein. 
It was noted in Nowak,  Sj\"ogren, and  Szarek \cite{NSSz} (see also \cite{NSSz0}) that ``\textit{Compared with qualitatively sharp estimates, 
genuinely sharp heat kernel bounds are in general harder to prove and appear rarely in the literature}.'' See, for instance, Ma\l{}ecki and Serafin \cite{MSer}, where genuinely sharp estimates in the setting of the Dirichlet Laplacian on the Euclidean ball were established (see also the references therein for other settings). In Nowak,  Sj\"ogren, and  Szarek \cite{NSSz0} such estimates were proved for the spherical heat kernel and in \cite{NSSz} for heat kernels on all compact rank-one symmetric spaces (and also for Jacobi expansions, and in the context of a ball and a simplex). 
See also the recent paper by Serafin \cite{Ser}, where estimates precisely describing the exponential behavior of Dirichlet heat kernels in convex domains with 
$\calC^{1,1}$ boundary were established (clearly, Weyl chambers are not such domains, except, say, half-spaces).

In the settings considered in this paper we use the formula contained in \eqref{first} for $p_t^{\eta,C_+}$. For $\eta\neq$\,\textsl{triv} it is given by an alternating sum. For $\eta=$\,\textsl{triv}, \eqref{first} is non-alternating and hence the estimates of the Neumann heat kernel are simple. Namely, we have the genuinely sharp estimate
\begin{equation}\label{Neu}
p_t^{{\rm triv},\,C_+}(x,y)\simeq p_t^{(d)}(x-y), \qquad x,y\in C_+,\quad t>0.
\end{equation}
This is because in the non-alternating sum given by \eqref{first} (when $\eta=$\,\textsl{triv}), the term corresponding to $g=e$ is dominant (for every 
$e\neq g\in W$, and $x,y\in C_+$, one has $\|gx-y\|>\|x-y\|$). Therefore, in what follows we focus on the case $\eta\neq$\,\textsl{triv}. For the  
domain $C_+$, which is convex, we have $|B(x,r)\cap C_+|\simeq r^d$, uniformly in $x\in C_+$ and $r>0$; this easily follows by the geometry of the situation. Hence in the considered case, \eqref{Neu} is stronger than \eqref{convex} and entails it with $c_1=c_2=\frac14)$. 

The analysis in the oscillating case on which we focus from now on, $\eta\neq$\,\textsl{triv}, is much more delicate and relies on explicit realizations of 
\eqref{first}. In the orthogonal root system case discussed in Section \ref{sec:ort}, which is a prelude to the much more involved cases of $I_2(3)$ 
and $I_2(4)$, $p_t^{\eta,\R_{k,+}}$ is given explicitly by \eqref{ort1}, and the analysis is relatively straightforward. In the case of the dihedral 
root system  $I_2(4)$, $p_t^{\eta,C_+^{(4)}}$ is given explicitly in terms of the hyperbolic sines and cosines by formulas stated in the beginning of Section  
\ref{sec:dih} for three  nontrivial homomorphisms $\eta$. For two of them, different from $\eta=$\,\textsl{det}, the analysis, performed in Sections 
\ref{sec:as2} and \ref{sec:as3}, is again relatively simple due to the fact that the relevant formulas are explicitly positive. This is not the 
case of $\eta=$\,\textsl{det}, where the delicacy represented by an oscillation has to be overcame by different means. The proof of Theorem \ref{thm:2} 
is contained in Section \ref{sec:es}; the case $m=4$ is treated first since it is slightly easier than that for $m=3$. Section \ref{sec:dih2} is devoted 
to the analysis of the $I_2(3)$ case and the proof of Theorem \ref{thm:1} is contained there. Finally,  Section \ref{sec:app}, the appendix 
consisting of four parts, is devoted to: (A) a discussion of harmonic profiles in the general setting of $C_+^{(m)}$; also a conjecture related 
to $C_+^{(m)}$ is stated there; (B) an application of the obtained bounds to a new setting of intersections of two half-spaces in $\R^d$, $d\ge3$; 
(C) an analysis of relations between our genuine estimates and bounds existing in the literature; (D) proofs of some postponed technicalities. 

Without explicit formulas for $p_t^{{\rm det},\,C_+}$ (that come from \eqref{first}) it is hard to expect an approach allowing to establish 
genuinely  sharp estimates for general $C_+$. Even for the simplest case of non-orthogonal root systems $I_2(m)$, $m\ge3$, i.e. for
the Weyl chamber $C_+^{(m)}$, being the cone on the plane with aperture $\pi/m$, proving genuinely  sharp estimates for the Dirichlet kernel 
$p_t^{{\rm det},\,C_+^{(m)}}$ seems to be, except for the cases $m\neq3$ and $m\neq4$, a challenging problem. Finally, we would like to emphasize that all our methods are purely analytic. 

Historical comments on Gaussian bounds for heat kernels on Euclidean (and more general) domains are scattered throughout \cite{GSC}; this monograph contains
also an extensive bibliography of the subject. The survey by Saloff-Coste \cite{SC} should also be consulted. 

We use the notation  $X\simeq Y$ to indicate that positive quantities $X$ and $Y$ are \textit{(uniformly) equivalent}, which means that  $C^{-1}Y\le X\le CY$ 
for some constant $C>0$, independent of significant quantities entering $X$ and $Y$. Writing $X\simeq\simeq Y\exp(-cZ)$ will mean that there exist positive constants $C,c_1,c_2$, such that 
$$
C^{-1}Y\exp(-c_1Z)\le X\le CY\exp(-c_2Z).
$$
This type of estimate we call \textit{qualitatively sharp}, whereas in the case $c_1=c_2$ we shall use the term \textit{genuinely sharp} 
estimate; sometimes \textit{genuinely} is replaced by \textit{quantitatively}. 

\section{Preliminaries} \label{sec:prel}
In this short section we collect all notions which are necessary for further reading. For details the reader can consult \cite{S2} and the references therein. 

The notion of finite reflection group (\textit{reflection group} for short) is based on the concept of a root system, a remarkably deep concept, with many ramifications in algebra and analysis. A \textit{root system} is a finite set $R$ of nonzero vectors in $\Rd$, called 
\textit{roots}, such that $ s_\a(R)=R$ for every $\a\in R$, where 
$$
s_\a(x)=x-\frac{2\langle \a,x\rangle}{\langle \a,\a\rangle}\a,  \qquad x\in\Rd,
$$
is the orthogonal reflection in $\langle\a\rangle^\bot=\{x\in\Rd\colon \langle x,\a\rangle=0\}$, the hyperplane orthogonal to $\a$. Clearly, 
$\langle-\a\rangle^\bot=\langle\a\rangle^\bot$, and to avoid further repetitions of these hyperplanes it is additionally assumed that 
$R\cap\mathbb{R}\a=\{\a,-\a\}$. The \textit{reflection group associated with} $R$ is the subgroup of $O(d)$ generated by the reflections $s_\a$, 
$W=W(R)=\langle s_\a\colon \a\in R\rangle$. The set $\Rd\setminus\bigcup_{\a\in R}\langle \a\rangle^\bot$ splits into an even number (equal to $|W|$)
of connected mutually congruent open components called \textit{Weyl chambers}. A choice of $x_0\in \Rd$ such that  $\langle \a,x_0\rangle\neq0$ 
for every $\a\in R$, gives the partition $R=R_+\sqcup (-R_+)$, where $R_+=\{\a\in R\colon \langle \a,x_0\rangle>0\}$ is referred to as the \textit{set of positive roots}. The partition distinguishes the chamber $C_+=\{x\in\Rd\colon \forall\, \a\in R_+ \,\,\,\langle x,\a\rangle>0\}$. 
Geometrically, as an intersection of a finite number of open half-spaces with supporting hyperplanes passing through the origin, $C_+$ is an \textit{open polyhedral cone} in $\Rd$ with vertex at the origin. Obviously, $C_+$ is an unbounded convex domain. 

In dimension 2 any root system is isomorphic (in the sense that the sets of the corresponding reflection lines are identical up to a rotation) 
to $I_2(m)$ for some $m\ge1$, where $I_2(m)=\{z_j\colon j=0,\ldots,2m-1\}$ in $\R^2\simeq \C$ and  $z_j=e^{i\pi j/m}$.
The reflection group $\mathcal D_m=W(I_2(m))$ is called the \textit{dihedral group} (frequently with restriction $m\ge3$). Geometrically, 
for $m\ge3$ this is the group of isometries of the regular $m$-gon centered at the origin with one of the vertices located at $i\simeq(0,1)\in\R^2$. 

In \cite{S2} realizations of the Laplacian on Weyl chambers associated with a reflection group were investigated; 
the more general setting of an arbitrary reflection group $W$ and an arbitrary open $W$-invariant set $\Omega\subset\Rd$ was treated by the author in \cite{S1}. 
These realizations in $L^2(C_+)$ are labeled by homomorphisms $\Hom$, are denoted $-\Delta^+_\eta$ and called the $\eta$-Laplacians on $C_+$. 
Heuristically, $\Hom$ serves for assigning signs to the facets $\mathcal F_\a=\overline{C_+}\cap\langle\a\rangle^\bot$ of $C_+$, which are the parts of
the boundary of $C_+$,   by setting ${\rm sign}_\eta(\mathcal F_\a):= \eta(s_\a)$, $\a\in\Sigma$, where $\Sigma\subset R_+$ is the (unique) system of 
simple roots in $R_+$.

Each  $-\Delta^+_\eta$ is a nonnegative self-adjoint extension of the differential operator $f\mapsto -\Delta f$, the (minus) Laplacian on $C_+$, with 
dense in $L^2(C_+)$ domain $C^\infty_c(C_+)$. The Dirichlet and Neumann Laplacians on $C_+$ are included and correspond to $\eta=$\,\textsl{det} and $\eta\equiv1$, 
respectively, and the other $\eta$-Laplacians are in between. The $\eta$-heat semigroup $\{e^{-t(-\Delta^+_\eta)}\}_{t>0}$ consists of integral operators with 
kernels $\{p_t^{\eta,\,C_+}\}_{t>0}$. 
It was proved, see \cite[Theorem 1.1]{S1} and \cite[Corollary 1.2]{S1}, that the $\eta$-heat kernel on $C_+$, $\{p_t^{\eta,C_+}\}_{t>0}$, is given by 
\begin{equation}\label{first}
p_t^{\eta,\,C_+}(x,y)=\sum_{g\in W}\eta(g)p_t^{(d)}(gx-y),\qquad x,y\in C_+,
\end{equation}
and its positivity on $C_+$ was established in \cite[Theorem 3.4]{S2}. On the boundary of $C_+$, $p_t^{\eta,\,C_+}$ behaves as follows, see 
\cite[Proposition 2.2]{S2}: for fixed $y\in C_+$ and $\a\in\Sigma$, if ${\rm sign}_\eta(\mathcal F_\a)=-1$, then $p_t^{\eta,\,C_+}(\cdot,y)$ vanishes on 
$\mathcal F_\a$, while if ${\rm sign}_\eta(\mathcal F_\a)=1$, then $p_t^{\eta,\,C_+}(\cdot,y)$ is positive on the $(d-1)$-dimensional interior of $\mathcal F_\a$. Finally we mention that the symmetry of $p_t^{\eta,\,C_+}(x,y)$ in $x$ and $y$ is seen directly from \eqref{first}. 

For reader's convenience we collect here the following formulas used throughout:
\begin{equation}\label{sin-sin}
\sinh \a\sinh \b=\frac12\big(\cosh(\a+\b)-\cosh(\a-\b)\big), 
\end{equation}
\begin{equation}\label{sin-cos}
\sinh \a\cosh \b=\frac12\big(\sinh(\a+\b)+\sinh(\a-\b)\big),
\end{equation}
\begin{equation}\label{cos-cos}
\cosh \a\cosh \b=\frac12\big(\cosh(\a+\b)+\cosh(\a-\b)\big).
\end{equation}

\section{Orthogonal root system case} \label{sec:ort}

An  introduction to this section can be found in \cite[Section 4.1]{MS}, \cite[Section 6.1]{S1}, and \cite[Section 5.1]{S2}. We recall basic 
facts for reader's convenience.

Up to a rotation, any orthogonal root system in $\Rd$ is isomorphic to the system $\{\pm e_{j_1},\ldots,\pm e_{j_k}\}$, where $1\le j_1<j_2<\ldots<j_k=d$ 
and $1\le k\le d$. Rather than to treat orthogonal systems in such generality it is convenient to consider slightly more ordered version, namely 
$R_k:=\{\pm e_{d-k+1},\ldots,\pm e_d\}$; in any case this limitation does not reduce generality of obtained results. Then, with the choice 
$R_{k,+}:=\{e_{d-k+1},\ldots,e_d\}$, the corresponding distinguished Weyl chamber is $\R_{+,k}=\mathbb{R}^{d-k}\times(0,\infty)^k$; for $d\ge2$ and $k=1$ 
this is  the upper half-space in $\Rd$, for $d=k=1$ this is the half-line $(0,\infty)$. 

Moreover,$\,W(R_k)\simeq\widehat{\mathbb Z}_2^k$ (group isomorphism) and the action of any $\ve=(\ve_j)_{j=1}^k\in \widehat{\mathbb Z}_2^k=\{1,-1\}$ 
on $\Rd$ is through
$$
x\to \ve x=(x_1,\ldots,x_{d-k},\ve_1x_{d-k+1},\ldots, \ve_k x_d).
$$
Consequently, we identify ${\rm Hom} (\widehat{\mathbb Z}_2^k, \widehat{\mathbb Z}_2)$ with $\mathbb Z_2^k$, where this time, $\mathbb Z_2=\{0,1\}$ with 
addition modulo 2. In this identification a homomorphism $\eta \in \mathbb Z_2^k$ of the reflection group represented by $\widehat{\mathbb Z}_2^k$ into 
$\widehat{\mathbb Z}_2$ acts  through $\ve \to \ve^\eta:=\prod_{j=1}^k \ve_j^{\eta_j}$ for  $\ve=(\ve_j)_{j=1}^k$. The trivial homomorphism is represented 
by $\textbf{0}:=(0,\ldots,0)$. The other distinguished homomorphism is represented by $\textbf{1}:=(1,\ldots,1)$, in the general case denoted as \textsl{det}. 

Thus, for $x,y\in \R_{+,k}$ and $\eta\in\Z_2^k$ we have (see \cite[Proposition 2.4]{S2}, cf. also \cite[Corollary 6.1]{S1})
\begin{equation}\label{ort1}
p_t^{\eta,\R_{k,+}}(x,y)=\prod_{j=1}^{d-k}p_t^{(1)}(x_{j}-y_{j})\prod_{j=d-k+1}^{d}\Big(p_t^{(1)}(x_j-y_j)+(-1)^{\eta(j-d+k)}p_t^{(1)}(x_j+y_j)\Big).
\end{equation}
By convention, the first product is assumed to be 1 if $k=d$. Analogous convention will be used in similar occurrences. 
Notice that the right-hand side  of \eqref{ort1} correctly reflects the behavior of $p_t^{\eta,\R_{+,k}}$  on the boundary of $\R_{+,k}$: given $y\in \R_{+,k}$, 
$p_t^{\eta,\R_{+,k}}(x,y)$ vanishes if $x$  belongs to any of the facets $\calF_j:=\langle e_{d-k+j}\rangle^\bot\cap\overline{\R_{+,k}}$, $j\in J_\eta$ (if any), 
and $p_t^{\eta,\R_{+,k}}(x,y)$ is positive if $x$ belongs to the $(d-1)$-dimensional interior of $\calF_j$ with $j\in J_\eta^c$ (if any), 
the complement of $J_\eta$ in $\{1,\ldots,k\}$. 

We are now ready to prove Proposition \ref{prop:1}. In dimension one, $d=1$, for the Neumann and Dirichlet heat kernels on $(0,\infty)$, 
$p_t^N(x,y)=p_t^{(1)}(x-y)+p_t^{(1)}(x+y)$ and $p_t^D(x,y)=p_t^{(1)}(x-y)-p_t^{(1)}(x+y)$, we have for $x,y>0$ and $t>0$,  
$p_t^N(x,y)\simeq p_t^{(1)}(x-y)$ and $p_t^D(x,y)\simeq \frac{xy}{xy+t}p_t^{(1)}(x-y)$; the latter is a consequence of $1-e^{-u}\simeq \frac u{u+1}$ for $u>0$. 
In dimension $d\ge2$, due to the product structure of $p_t^{\eta,\R_{+,k}}(x,y)$ (which is a general consequence of reducibility of the orthogonal 
root system  $R_k$), the proof is a simple  tensorization of the one-dimensional result. 
Indeed, consider just for the simplicity of notation only $\eta=\textbf{1}$ (the case of the Dirichlet kernel, in the case of general $\eta$ 
the changes are routine). We obtain for $x=(x',x_{d-k+1},\ldots,x_{d})$, $y=(y',y_{d-k+1},\ldots,y_{d})$, and $t>0$,
\begin{align*} 
p_t^{\textbf{1},\R_{k,+}}(x,y)&=p_t^{(d-k)}(x'-y')\prod_{j=d-k+1}^{d}\Big(p_t^{(1)}(x_j-y_j)-p_t^{(1)}(x_j+y_j)\Big)\\
&=p_t^{(d)}(x-y)\prod_{j=d-k+1}^{d}\Big(1-e^{-x_jy_j/t}\Big)\\
&\simeq p_t^{(d)}(x-y)\prod_{j=d-k+1}^{d}\frac{x_jy_j}{x_jy_j+t}.
\end{align*}

\begin{remark}
For $\eta=\textbf{0}$ (the case of the Neumann heat kernel), when $J_{\textbf{0}}=\emptyset$, the product in \eqref{orto} equals $1$, and the result of Proposition \ref{prop:1} is consistent with \eqref{Neu} stated for the general case of $\eta=$\,\textsl{triv} and $C_+$. For $\eta=\textbf{1}$ (the case of the Dirichlet heat kernel), when $J_{\textbf{1}}=\{1,\ldots,k\}$, the estimates \eqref{orto} are stronger than \eqref{convex2}. Indeed, the harmonic profile of 
$\R_{+,k}$ is
$$
h(y)=\prod_{j=d-k+1}^d y_{j}, \qquad y\in \R_{+,k},
$$
and 
\begin{equation*}
V_{h^2}(x,r)=\int_{\{y\in \R_{+,k}\colon \|x-y\|<r\}} h^2(y)\,dy
\simeq r^d\prod_{j=d-k+1}^d (x_j^2+r^2)
\end{equation*}
uniformly in $x\in \R_{+,k}$ and $r>0$, so that \eqref{convex2} in the considered setting becomes
\begin{align*}
\prod_{j=d-k+1}^d \frac{x_jy_j}{\sqrt{(x_j^2+t)(y_j^2+t)}}\,t^{-d/2} e^{-c_1\frac{\|x-y\|^2}{t}}&\lesssim p_t^{\textbf{1},\R_{+,k}}(x,y)
\lesssim\\
& \prod_{j=d-k+1}^d \frac{x_jy_j}{\sqrt{(x_j^2+t)(y_j^2+t)}}\,t^{-d/2}e^{-c_2\frac{\|x-y\|^2}{t}}.
\end{align*}
To see that the above is implied by \eqref{orto} taken with $\eta=\textbf{1}$ note, that the left-hand side follows with $c_1=\frac14$ from \eqref{ort1} 
taken with $\eta=\textbf{1}$ by using the simple inequality $(ab+1)^2\le (a^2+1)(b^2+1)$, $a,b>0$. On the other hand,  the right-hand 
side is implied, with any $0<c_2<\frac14$, by \eqref{orto} and the inequality
\begin{equation}\label{ort2}
(a+1)(b+1)\lesssim (ab+1)e^{\varepsilon(a-b)^2}, \qquad a,b>0.
\end{equation}
\end{remark}

\begin{remark}\label{long}
It follows from Proposition \ref{prop:1} what the long-time decay of $p_t^{\eta,\R_{+,k}}$ is. Namely, given $x,y\in\R_{+,k}$ and $\eta\in \mathbb Z_2^k$ 
we have $p_t^{\eta,\R_{+,k}}(x,y)\simeq t^{-d/2-\#J_\eta}$ for $t\ge1$. As it is seen, the decay improves when the number of facets of $\R_{+,k}$,  where the Dirichlet type boundary conditions are imposed increases. 
\end{remark}

Finally, we point out an inconsistency between a result appearing in the literature and \eqref{ort1} taken with $\eta=\textbf{1}$ and $k=1$, i.e. 
the case of the Dirichlet heat kernel $p_t^D:=p_t^{\eta,\R_{+,1}}$ on the half-space $\R_{+,1}=\R^{d-1}\times(0,\infty)$ as a Weyl chamber. 
Namely, Proposition \ref{prop:1} gives 
\begin{equation}\label{ort3}
p_t^D(x,y)\simeq \frac{x_dy_d}{x_dy_d+t}\,p_t^{(d)}(x-y),
\end{equation}
uniformly in $t>0$ and $x=(x',x_d)\in \R^{d-1}\times(0,\infty)$, $y=(y',y_d)\in \R^{d-1}\times(0,\infty)$. 
It follows that the equivalence stated in \cite[Section 5]{SC}, 
\begin{equation}\label{ort4}
p_t^D(x,y)\simeq \frac{x_dy_d}{(x_d+\sqrt t)(y_d+\sqrt t)}\,p_t^{(d)}(x-y),
\end{equation}
is not consistent with \eqref{ort3} (however, qualitatively sharp estimates in \cite[Section 1.2]{GSC} are correct). Indeed, if \eqref{ort4} were correct, 
then, with the simplifying case of $t=1$, one would get $(x_d+1)(y_d+1)\lesssim x_dy_d+1$ for $x_d,y_d>0$, which is false.

\section{Dihedral root system $I_2(4)$ case} \label{sec:dih}
For an introduction to this section see \cite[Section 5.4]{S2} and \cite[Section 6.2]{S1}. We only recall that the open cone on the plane 
$$
C_+^{(4)}=\{\rho e^{i \theta}\colon \rho>0,\,\,\,\,0<\theta<\pi/4\}=\{(x_1,x_2)\colon 0<x_2<x_1\},
$$
with vertex at the origin and aperture $\pi/4$, is the distinguished Weyl chamber in this context.

The kernels $p_t^{\eta,C_+^{(4)}}$ associated with one of the four homomorphisms $\eta$ are: for $t>0$ and $x,y\in C_+^{(4)}$, 
$$
p_t^{\eta,C_+^{(4)}}(x,y)=\frac1{2\pi t}\exp\big(-\frac{\|x-y\|^2}{4t}\big)\exp\big(-\frac{\langle x,y\rangle}{2t}\big)\Psi_{t,\eta}(x,y),
$$
where (we write $\Psi_{t,\calN_i}$, $i=1,2$, in place of $\Psi_{t,\eta_{\calN_i}}$)
\begin{align*} 
(1)\quad \eta&=\sgn: \qquad  \Psi_{t,\sgn}(x,y)=\sinh\big(\frac{x_1y_1}{2t}\big)\sinh\big(\frac{x_2y_2}{2t}\big)-
\sinh\big(\frac{x_2y_1}{2t}\big)\sinh\big(\frac{x_1y_2}{2t}\big),\\
(2)\quad \eta&=\eta_{\calN_1}: \qquad   \Psi_{t,\calN_1}(x,y)=\cosh\big(\frac{x_1y_1}{2t}\big)\cosh\big(\frac{x_2y_2}{2t}\big)-
\cosh\big(\frac{x_2y_1}{2t}\big)\cosh\big(\frac{x_1y_2}{2t}\big),\\
(3)\quad \eta&=\eta_{\calN_2}: \qquad  \Psi_{t,\calN_2}(x,y)=\sinh\big(\frac{x_1y_1}{2t}\big)\sinh\big(\frac{x_2y_2}{2t}\big)+
\sinh\big(\frac{x_2y_1}{2t}\big)\sinh\big(\frac{x_1y_2}{2t}\big),\\
(4)\quad \eta&={\rm triv}: \qquad  \Psi_{t,{\rm triv}}(x,y)=\cosh\big(\frac{x_1y_1}{2t}\big)\cosh\big(\frac{x_2y_2}{2t}\big)+
\cosh\big(\frac{x_2y_1}{2t}\big)\cosh\big(\frac{x_1y_2}{2t}\big).
\end{align*}
We mention that $\eta_{\calN_1}$ corresponds to the case of the Laplacian on $C_+^{(4)}$ subject to the Dirichlet boundary condition on 
$\{(x_1,x_1)\colon x_1\ge0\}$ and the Neumann boundary condition on $\{(x_1,0)\colon x_1>0\}$,  and for $\eta_{\calN_2}$ the roles of these 
boundary conditions are switched. 

The formula
\begin{equation}\label{cosh}
\cosh \a-\cosh \b=2\sinh\big(\frac{\a+\b}2\big)\sinh\big(\frac{\a-\b}2\big),
\end{equation}
and \eqref{cos-cos} easily lead to an equivalent version of (2), namely, 
\begin{align*} 
(2)'\quad \Psi_{t,\calN_1}(x,y)=&\sinh\big((x_1+x_2)(y_1+y_2)/4t\big)\sinh\big((x_1-x_2)(y_1-y_2)/4t\big)\\
+&\sinh\big((x_1+x_2)(y_1-y_2)/4t\big)\sinh\big((x_1-x_2)(y_1+y_2)/4t\big),
\end{align*}
and this directly shows positivity of $\Psi_{t,\calN_1}$ (and thus also positivity of $p_t^{\eta_{\calN_1},C_+^{(4)}}$)  on $C_+^{(4)}$ (Remark: 
positivity of $\Psi_{t,\calN_1}$ on $C_+^{(4)}$ based on (2) was shown by a direct computation in \cite[p.\,369, B)]{S2}.) Notice that similar tools lead to 
\begin{align*} 
(1)'\quad \Psi_{t,\sgn}(x,y)=&\sinh\big((x_1+x_2)(y_1+y_2)/4t\big)\sinh\big((x_1-x_2)(y_1-y_2)/4t\big)\\
-&\sinh\big((x_1+x_2)(y_1-y_2)/4t\big)\sinh\big((x_1-x_2)(y_1+y_2)/4t\big),
\end{align*}
which is, however, not sufficient to claim directly the positivity of $\Psi_{t,\sgn}$ on $C_+^{(4)}$. (It is doubtful if $\Psi_{t,\sgn}$ could be expressed 
as a finite sum of explicitly positive summands.)

We remark that positivity of $\Psi_{t,\sgn}$ on $C_+^{(4)}$ based on (1) was shown by a direct computation in \cite[p.\,369, A)]{S2}. For an easier argument  
for this fact see a comment following Lemma \ref{lem:bas} .

It is worth mentioning that, according to the general principles (see \cite[(3.3)]{S2}), $p_t^{\sgn,C_+^{(4)}}\le p_t^{\eta,C_+^{(4)}}\le p_t^{{\rm triv},C_+^{(4)}}$ 
for $\eta=\eta_{\calN_i}$, $i=1,2$, which is equivalent to analogous inequalities for the corresponding $\Psi_{t,\eta}$ functions; these inequalities  are seen directly. 

We shall use the asymptotic 
\begin{equation}\label{ass}
\sinh u\simeq \frac u{u+1}e^u, \qquad 0<u<\infty.
\end{equation}

\subsection{Estimates of $p_t^{\eta_{\calN_2},C_+^{(4)}}$} \label{sec:as2}

\begin{proposition} \label{pro:N2}
We have, uniformly in $x=(x_1,x_2)\in C_+^{(4)},\,y=(y_1,y_2)\in C_+^{(4)}$, and $t>0$, 
\begin{equation}\label{XY}
p_t^{\eta_{\calN_2},C_+^{(4)}}(x,y)\simeq \frac{x_1y_1}{x_1y_1+t}\frac{x_2y_2}{x_2y_2+t}\,p_t^{(2)}(x-y).
\end{equation}
\end{proposition}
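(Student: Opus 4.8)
The plan is to estimate $\Psi_{t,\calN_2}(x,y)=\sinh\big(\tfrac{x_1y_1}{2t}\big)\sinh\big(\tfrac{x_2y_2}{2t}\big)+\sinh\big(\tfrac{x_2y_1}{2t}\big)\sinh\big(\tfrac{x_1y_2}{2t}\big)$ directly, exploiting that both summands are explicitly positive (no oscillation, unlike the $\eta=\sgn$ case). First I would write, using \eqref{sin-sin},
$$
\Psi_{t,\calN_2}(x,y)=\tfrac12\Big(\cosh\tfrac{(x_1y_1+x_2y_2)}{2t}-\cosh\tfrac{(x_1y_1-x_2y_2)}{2t}\Big)+\tfrac12\Big(\cosh\tfrac{(x_2y_1+x_1y_2)}{2t}-\cosh\tfrac{(x_2y_1-x_1y_2)}{2t}\Big),
$$
but more useful is the factored form from \eqref{cosh}: grouping the two terms as in the derivation of $(2)'$, one gets
$$
\Psi_{t,\calN_2}(x,y)=\cosh\!\big(\tfrac{\langle x,y\rangle}{2t}\big)\Big(\cosh\tfrac{(x_1-x_2)(y_1-y_2)+\ldots}{\ldots}\Big)
$$
— rather, I would apply \eqref{cos-cos} in reverse to each product. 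The cleanest route: since $x_1y_1+x_2y_2=\langle x,y\rangle$ and $x_2y_1+x_1y_2$ are the two relevant "plus" combinations, and noting $\langle x,y\rangle\ge x_2y_1+x_1y_2\ge 0$ on $C_+^{(4)}$ (because $(x_1-x_2)(y_1-y_2)\ge0$), one obtains
$$
\Psi_{t,\calN_2}(x,y)=\sinh\!\big(\tfrac{x_1y_1}{2t}\big)\sinh\!\big(\tfrac{x_2y_2}{2t}\big)+\sinh\!\big(\tfrac{x_1y_2}{2t}\big)\sinh\!\big(\tfrac{x_2y_1}{2t}\big),
$$
and I will estimate each of the two products separately via the asymptotic \eqref{ass}, $\sinh u\simeq\frac{u}{u+1}e^u$.

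The key computation: by \eqref{ass},
$$
\sinh\!\big(\tfrac{x_1y_1}{2t}\big)\sinh\!\big(\tfrac{x_2y_2}{2t}\big)\simeq \frac{x_1y_1}{x_1y_1+2t}\cdot\frac{x_2y_2}{x_2y_2+2t}\,e^{\langle x,y\rangle/2t},
$$
and likewise the second product is $\simeq \frac{x_1y_2}{x_1y_2+2t}\cdot\frac{x_2y_1}{x_2y_1+2t}\,e^{(x_1y_2+x_2y_1)/2t}$. Since on $C_+^{(4)}$ we have $x_1y_2+x_2y_1\le \langle x,y\rangle$, the exponential factor of the first product dominates; so the whole of $\Psi_{t,\calN_2}$ is bounded above by a constant times the first product's expression. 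For the lower bound the first summand alone suffices. Hence
$$
\Psi_{t,\calN_2}(x,y)\simeq \frac{x_1y_1}{x_1y_1+t}\,\frac{x_2y_2}{x_2y_2+t}\,e^{\langle x,y\rangle/2t},
$$
after absorbing the harmless constant $2$ in $t$ (note $\frac{u}{u+2t}\simeq\frac{u}{u+t}$ uniformly). Multiplying by the prefactor $\frac1{2\pi t}e^{-\|x-y\|^2/4t}e^{-\langle x,y\rangle/2t}$ cancels the exponential $e^{\langle x,y\rangle/2t}$ exactly, leaving $\frac1{2\pi t}e^{-\|x-y\|^2/4t}=p_t^{(2)}(x-y)$ times the two rational factors, which is \eqref{XY}.

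The only genuine care needed — and this is the mild obstacle — is the upper bound: one must verify that the \emph{second} summand $\sinh(\tfrac{x_1y_2}{2t})\sinh(\tfrac{x_2y_1}{2t})$, after multiplication by the $e^{-\langle x,y\rangle/2t}$ prefactor, is controlled by the target right-hand side. This reduces to showing
$$
\frac{x_1y_2}{x_1y_2+t}\cdot\frac{x_2y_1}{x_2y_1+t}\,e^{-(\langle x,y\rangle-x_1y_2-x_2y_1)/2t}\ \lesssim\ \frac{x_1y_1}{x_1y_1+t}\cdot\frac{x_2y_2}{x_2y_2+t},
$$
i.e. $\frac{x_1y_2\,x_2y_1}{(x_1y_2+t)(x_2y_1+t)}\,e^{-(x_1-x_2)(y_1-y_2)/2t}\lesssim \frac{x_1y_1\,x_2y_2}{(x_1y_1+t)(x_2y_1+\ldots)}$; since the numerators $x_1y_2x_2y_1=x_1y_1x_2y_2$ coincide, this follows from an elementary inequality of the shape $(ab+t)(cd+t)\cdot 1\lesssim (a'b'+t)(c'd'+t)e^{(a-c)(b-d)/2t}$ — precisely the type of estimate already used in the remark following Proposition \ref{prop:1} (cf. \eqref{ort2}), namely $(p+t)(q+t)\lesssim (pq/r + t)(r+t)\,e^{\varepsilon(\cdot)}$ type bounds — and I would dispatch it by the same two-case split (small vs.\ large values of the arguments relative to $t$). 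This makes the whole argument short and purely analytic, as desired.
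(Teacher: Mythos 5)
Your argument is essentially the paper's own proof: after applying \eqref{ass} to each hyperbolic sine, everything reduces to showing that the cross term $\frac{x_2y_1}{x_2y_1+t}\,\frac{x_1y_2}{x_1y_2+t}\,e^{-(x_1-x_2)(y_1-y_2)/2t}$ is dominated by $\frac{x_1y_1}{x_1y_1+t}\,\frac{x_2y_2}{x_2y_2+t}$, which is exactly the paper's statement $\phi_2\lesssim 1$, proved there by a three-case split (not by \eqref{ort2}, which is a related but different inequality). The reduced inequality $(x_1y_1+t)(x_2y_2+t)\lesssim(x_1y_2+t)(x_2y_1+t)\,e^{(x_1-x_2)(y_1-y_2)/2t}$ that you leave as a sketch is indeed true and elementary — since $x_1y_1\cdot x_2y_2=x_1y_2\cdot x_2y_1$ and $(x_1y_1+x_2y_2)-(x_1y_2+x_2y_1)=(x_1-x_2)(y_1-y_2)$, it follows with constant $2$ from $e^w\ge 1+w$ — so no real gap remains; just note that your intermediate sentence claiming the first summand dominates merely because its exponential factor is larger is not valid as stated (the rational prefactor of the cross term is always at least as large, by the same product identity), and it is precisely this point that the reduced inequality, like the paper's $\phi_2$ bound, takes care of.
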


\begin{proof}
By a simple homogeneity argument it suffices to establish \eqref{XY} for $t=1/2$ and hence we are reduced to verifying that for $0<x_2<x_1$ and $0<y_2<y_1$ 
it holds 
\begin{align*}
e^{-\langle x,y\rangle}\Psi_{1/2,\calN_2}(x,y)&=e^{-\langle x,y\rangle}\big(\sinh(x_1y_1)\sinh(x_2y_2)+\sinh(x_2y_1)\sinh(x_1y_2) \big)\\
&\simeq \frac{x_1y_1}{x_1y_1+1}\frac{x_2y_2}{x_2y_2+1}
\end{align*}
(replacing the summand $1$ by $1/2$, or vice versa, in the denominators of the last two factors is justified). We now use \eqref{ass} to obtain 
\begin{align*}
&\,\,\,e^{-\langle x,y\rangle}\Psi_{1/2,\calN_2}(x,y)\\
&\simeq e^{-x_1y_1-x_2y_2}\Big(\frac{x_1y_1}{x_1y_1+1}e^{x_1y_1}\,\frac{x_2y_2}{x_2y_2+1}e^{x_2y_2}+
\frac{x_2y_1}{x_2y_1+1}e^{x_2y_1}\,\frac{x_1y_2}{x_1y_2+1}e^{x_1y_2} \Big)\\
&=\frac{x_1y_1}{x_1y_1+1}\,\frac{x_2y_2}{x_2y_2+1}+\frac{x_2y_1}{x_2y_1+1}\,\frac{x_1y_2}{x_1y_2+1}\,e^{-(x_1-x_2)(y_1-y_2)}\\
&=\frac{x_1y_1}{x_1y_1+1}\,\frac{x_2y_2}{x_2y_2+1}\Big(1+\frac{x_1y_1+1}{x_2y_1+1}\,\frac{x_2y_2+1}{x_1y_2+1}\, e^{-(x_1-x_2)(y_1-y_2)} \Big).
\end{align*}
To finish, it is sufficient to check that 
$$
\phi_2(x,y):=\frac{x_1y_1+1}{x_2y_1+1}\,\frac{x_2y_2+1}{x_1y_2+1}\, e^{-(x_1-x_2)(y_1-y_2)}
$$
is bounded from above for $0<x_2<x_1$ and $0<y_2<y_1$; these constraints remain in force until the end of the proof. 

Assume that $\frac{x_1}2\le x_2<x_1$, which implies $x_2y_1+1>(x_1y_1+1)/2$, and we have
$$
\phi_2(x,y)<\frac{x_1y_1+1}{x_2y_1+1}<2.
$$
Analogously, in the symmetric case $\frac{y_1}2\le y_2<y_1$ we obtain
$$
\phi_2(x,y)<\frac{x_1y_1+1}{x_1y_2+1}<2.
$$
Therefore we are left with the case when $0<x_2<\frac{x_1}2$ and $0<y_2<\frac{y_1}2$, i.e. when $x_1-x_2>\frac12x_1$ and  $y_1-y_2>\frac12y_1$. Then
$$
\phi_2(x,y)<\frac{x_1y_1+1}{x_2y_1+1}\, e^{-\frac14x_1y_1}
$$
and if $x_1y_1\le2$, then $\phi_2(x,y)\le 3$, while if  $x_1y_1>2$, then using $ue^{-u}\lesssim 1$ for $u>0$, gives
$$
\phi_2(x,y)\lesssim \frac{x_1y_1}{x_2y_1+1}\, \frac 1{x_1y_1}<1
$$
and we are done.
\end{proof}
Notice that the right-hand side  of \eqref{XY} correctly reflects the behavior of $p_t^{\eta_{\calN_2},C_+^{(4)}}(x,y)$ on the boundary of $C_+^{(4)}$: 
for $y\in C_+^{(4)}$, $p_t^{\eta_{\calN_2},C_+^{(4)}}(\cdot,y)$ vanishes on the half-line $\{(s,0)\colon s\ge0\}$, and is positive on the half-line 
$\{(s,s)\colon s>0\}$. 

\subsection{Estimates of $p_t^{\eta_{\calN_1},C_+^{(4)}}$} \label{sec:as3}
\begin{proposition} \label{pro:N1}
We have, uniformly in  $x=(x_1,x_2)\in C_+^{(4)}$, $y=(y_1,y_2)\in C_+^{(4)}$, and $t>0$, 
\begin{equation}\label{N1}
p_t^{\eta_{\calN_1},C_+^{(4)}}(x,y)\simeq \frac{x_1y_1}{x_1y_1+t}\,\frac{(x_1-x_2)(y_1-y_2)}{(x_1-x_2)(y_1-y_2)+t}\,p_t^{(2)}(x-y).
\end{equation}
\end{proposition}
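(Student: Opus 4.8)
The plan is to mimic the proof of Proposition \ref{pro:N2}. By a simple homogeneity argument it suffices to establish \eqref{N1} for $t=1/2$; writing $p_t^{\eta,C_+^{(4)}}(x,y)=2\,p_t^{(2)}(x-y)\,e^{-\langle x,y\rangle/2t}\Psi_{t,\eta}(x,y)$ we are then reduced (exactly as in the proof of Proposition \ref{pro:N2}, the constant $1$ versus $1/2$ in the denominators being immaterial) to verifying that, for $0<x_2<x_1$ and $0<y_2<y_1$,
$$
e^{-\langle x,y\rangle}\Psi_{1/2,\calN_1}(x,y)\simeq \frac{x_1y_1}{x_1y_1+1}\,\frac{(x_1-x_2)(y_1-y_2)}{(x_1-x_2)(y_1-y_2)+1}.
$$
Here I would use the representation $(2)'$ rather than $(2)$, writing $\Psi_{1/2,\calN_1}(x,y)=\sinh A\sinh B+\sinh C\sinh D$ with
$$
A=\tfrac12(x_1+x_2)(y_1+y_2),\quad B=\tfrac12(x_1-x_2)(y_1-y_2),\quad C=\tfrac12(x_1+x_2)(y_1-y_2),\quad D=\tfrac12(x_1-x_2)(y_1+y_2),
$$
all four positive on $C_+^{(4)}$. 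Two elementary identities, each checked in one line, will drive everything: $A+B=\langle x,y\rangle$ and $AB=CD\ \big(=\tfrac14(x_1^2-x_2^2)(y_1^2-y_2^2)\big)$; note also $C+D=x_1y_1-x_2y_2$, so that $(C+D)-(A+B)=-2x_2y_2$.

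Next I would apply the asymptotic \eqref{ass} to each hyperbolic sine. Using $A+B=\langle x,y\rangle$ this gives, uniformly,
$$
e^{-\langle x,y\rangle}\Psi_{1/2,\calN_1}(x,y)\simeq \frac{A}{A+1}\frac{B}{B+1}+\frac{C}{C+1}\frac{D}{D+1}\,e^{(C+D)-(A+B)}=\frac{A}{A+1}\frac{B}{B+1}+\frac{C}{C+1}\frac{D}{D+1}\,e^{-2x_2y_2}=:T_1+T_2 .
$$
Since $x_1\le x_1+x_2<2x_1$ and $y_1\le y_1+y_2<2y_1$, one has $A\simeq x_1y_1$, whence $T_1\simeq\frac{x_1y_1}{x_1y_1+1}\,\frac{(x_1-x_2)(y_1-y_2)}{(x_1-x_2)(y_1-y_2)+1}$, which is exactly the target right-hand side. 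It therefore only remains to prove the pointwise bound $T_2\lesssim T_1$: this yields $T_1+T_2\simeq T_1$ and finishes the argument (the positivity of $\Psi_{1/2,\calN_1}$, needed for the estimate to be meaningful, is already visible from $(2)'$).

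For the bound $T_2\lesssim T_1$ I would use $AB=CD$ to rewrite $T_1=\frac{AB}{(A+1)(B+1)}$ and $T_2=\frac{AB}{(C+1)(D+1)}\,e^{-2x_2y_2}$, so that, since $(A+1)(B+1)=AB+(A+B)+1$ and $(C+1)(D+1)=AB+(C+D)+1$,
$$
\frac{T_2}{T_1}=\frac{(A+1)(B+1)}{(C+1)(D+1)}\,e^{-2x_2y_2}=\Big(1+\frac{2x_2y_2}{AB+(C+D)+1}\Big)e^{-2x_2y_2}\le (1+2x_2y_2)\,e^{-2x_2y_2}\le 1,
$$
using $AB+(C+D)+1\ge1$ and $e^{u}\ge1+u$. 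The general $t$ then follows by homogeneity.

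The routine ingredients are the homogeneity reduction, the comparison $A\simeq x_1y_1$, and the two identities $A+B=\langle x,y\rangle$, $AB=CD$. The only step calling for any ingenuity is the comparison $T_2\lesssim T_1$, and I expect it to be the main — though ultimately mild — obstacle: the argument above exploits the ``hidden'' symmetry $AB=CD$ of the four hyperbolic arguments, by which the factor $e^{-2x_2y_2}=e^{(C+D)-(A+B)}$ exactly compensates the a priori larger product $(A+1)(B+1)$ against $(C+1)(D+1)$. Without noticing $AB=CD$ one would instead have to fall back on a case distinction according to the relative sizes of $x_2/x_1$, $y_2/y_1$ and of $x_1y_1$, in the spirit of the proof of Proposition \ref{pro:N2}; that route is also correct but noticeably more laborious.
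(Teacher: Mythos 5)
Your proposal is correct, and every step checks out: the identities $A+B=\langle x,y\rangle$, $C+D=x_1y_1-x_2y_2$, $AB=CD$ are all valid, the application of \eqref{ass} is legitimate (a sum of positive, termwise equivalent quantities), $A\simeq x_1y_1$ gives the right form of $T_1$, and the computation $T_2/T_1=\bigl(1+\tfrac{2x_2y_2}{AB+(C+D)+1}\bigr)e^{-2x_2y_2}\le(1+2x_2y_2)e^{-2x_2y_2}\le 1$ is airtight. The overall skeleton coincides with the paper's: reduction to $t=1/2$ by homogeneity, the representation $(2)'$, the asymptotic \eqref{ass}, and then a bound showing the second summand is dominated by the first. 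Where you genuinely diverge is in that last, crucial step. The paper singles out the first product and proves boundedness of the quotient $\phi_1(x,y)$ by a three-case analysis according to whether $y_2<y_1/2$, $x_2<x_1/2$, or both fractions are at least $1/2$, with a further subdivision on the size of $x_1y_1$. You instead exploit the exact algebraic symmetry $AB=CD$ together with $(A+B)-(C+D)=2x_2y_2$ and the inequality $e^u\ge 1+u$, obtaining the pointwise bound $T_2\le T_1$ with no cases at all. This is shorter, sharper (constant $1$ rather than an unspecified bound), and the same device would also streamline the paper's proof of Proposition \ref{pro:N2}, where the analogous identity $x_1y_1\cdot x_2y_2=x_2y_1\cdot x_1y_2$ and $(x_1y_1+x_2y_2)-(x_2y_1+x_1y_2)=(x_1-x_2)(y_1-y_2)$ yield $\phi_2\le\bigl(1+(x_1-x_2)(y_1-y_2)\bigr)e^{-(x_1-x_2)(y_1-y_2)}\le1$ in one line. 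The trade-off is only that your argument leans on a hidden identity specific to these dihedral formulas, whereas the paper's case analysis is the more robust, if more laborious, fallback.
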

\begin{proof}
It suffices to establish \eqref{N1} for $t=1/2$ and hence the task reduces to verifying that for $0<x_2<x_1$ and $0<y_2<y_1$ we have 
\begin{align*}
e^{-\langle x,y\rangle}\Psi_{1/2,\calN_1}(x,y)&=e^{-\langle x,y\rangle}\Big(\sinh\big((x_1+x_2)(y_1+y_2)/2\big)\sinh\big((x_1-x_2)(y_1-y_2)/2\big)\\
&\,\,\,\,\,\,\,\,\,\,\,\,\,\,\,\,\,\,\,\,\,\,\,\,+\sinh\big((x_1+x_2)(y_1-y_2)/2\big)\sinh\big((x_1-x_2)(y_1+y_2)/2\big)\Big)\\
&\simeq \frac{x_1y_1}{x_1y_1+1}\,\frac{(x_1-x_2)(y_1-y_2)}{(x_1-x_2)(y_1-y_2)+1}. 
\end{align*}
We use \eqref{ass} to obtain that $e^{-\langle x,y\rangle}\Psi_{1/2,\calN_1}(x,y)$ is  equivalent to 
\begin{align*}
e^{-x_1y_1-x_2y_2}&\Big(\frac{(x_1+x_2)(y_1+y_2)}{(x_1+x_2)(y_1+y_2)+1}\,e^{(x_1+x_2)(y_1+y_2)/2}\,
\frac{(x_1-x_2)(y_1-y_2)}{(x_1-x_2)(y_1-y_2)+1}\,e^{(x_1-x_2)(y_1-y_2)/2}\\
&+\frac{(x_1+x_2)(y_1-y_2)}{(x_1+x_2)(y_1-y_2)+1}\,e^{(x_1+x_2)(y_1-y_2)/2}\,\frac{(x_1-x_2)(y_1+y_2)}{(x_1-x_2)(y_1+y_2)+1}\,e^{(x_1-x_2)(y_1+y_2)/2}\Big)\\
&=\frac{(x_1+x_2)(y_1+y_2)}{(x_1+x_2)(y_1+y_2)+1}\,\frac{(x_1-x_2)(y_1-y_2)}{(x_1-x_2)(y_1-y_2)+1}\\
&+\frac{(x_1+x_2)(y_1-y_2)}{(x_1+x_2)(y_1-y_2)+1}\,\frac{(x_1-x_2)(y_1+y_2)}{(x_1-x_2)(y_1+y_2)+1}\,e^{-2x_2y_2}\\
&=\frac{(x_1+x_2)(y_1+y_2)}{(x_1+x_2)(y_1+y_2)+1}\,\frac{(x_1-x_2)(y_1-y_2)}{(x_1-x_2)(y_1-y_2)+1}\\
&\times \Big(1+\frac{(x_1+x_2)(y_1+y_2)+1}{(x_1+x_2)(y_1-y_2)+1}\,\frac{(x_1-x_2)(y_1-y_2)+1}{(x_1-x_2)(y_1+y_2)+1}\, e^{-2x_2y_2} \Big).
\end{align*}
Observing that
$$
\frac{(x_1+x_2)(y_1+y_2)}{(x_1+x_2)(y_1+y_2)+1}\simeq\frac{x_1y_1}{x_1y_1+1},
$$
to finish it suffices now to check that 
$$
\phi_1(x,y):=\frac{(x_1+x_2)(y_1+y_2)+1}{(x_1+x_2)(y_1-y_2)+1}\,\frac{(x_1-x_2)(y_1-y_2)+1}{(x_1-x_2)(y_1+y_2)+1}\, e^{-2x_2y_2}\lesssim1,
$$
uniformly in $0<x_2<x_1$ and $0<y_2<y_1$.

We have $(x_1+x_2)(y_1+y_2)+1< 2\big((x_1+x_2)y_1+1\big)$ and for $0<y_2<\frac{y_1}2$,  $(x_1+x_2)(y_1-y_2)+1>\frac12\big((x_1+x_2)y_1+1\big)$, hence
in this case 
$$
\phi_1(x,y)< \frac{(x_1+x_2)(y_1+y_2)+1}{(x_1+x_2)(y_1-y_2)+1}<4.
$$
Analogously, for $0<x_2<\frac{x_1}2$ we end up with the situation symmetric to the just considered and 
$$
\phi_1(x,y)< \frac{(x_1+x_2)(y_1+y_2)+1}{(x_1-x_2)(y_1+y_2)+1}<4.
$$
Therefore it remains to analyse the case when  $\frac{x_1}2\le x_2<x_1$ and $\frac{y_1}2\le y_2<y_1$. We have
$$
\phi_1(x,y)<\frac{(x_1+x_2)(y_1+y_2)+1}{(x_1+x_2)(y_1-y_2)+1}\,e^{-2x_2y_2}<4(x_1y_1+1)\, e^{-2x_2y_2}. 
$$
Now, if $x_1y_1\le1$, then $\phi_1(x,y)<8$. In the opposite case when $x_1y_1>1$, since $x_2y_2\simeq x_1y_1$, we obtain
$$
\phi_1(x,y)\lesssim \frac {x_1y_1+1}{x_2y_2}\lesssim \frac{x_1y_1+1}{x_1y_1}\lesssim 1.
$$
\end{proof}
Again, we remark that  the right-hand side  of \eqref{N1} correctly reflects the behavior of $p_t^{\eta_{\calN_1},C_+^{(4)}}$ on  $\partial C_+^{(4)}$: 
for $y\in C_+^{(4)}$, $p_t^{\eta_{\calN_1},C_+^{(4)}}(\cdot,y)$ vanishes on  $\{(s,s)\colon s\ge0\}$, and is positive on  $\{(s,0)\colon s>0\}$. 

\subsection{Estimates of $p_t^{{\rm det},C_+^{(4)}}$} \label{sec:es}

This section is entirely devoted to the proof of Theorem \ref{thm:2}. Recall that 
$$
p_t^{{\rm det},C_+^{(4)}}(x,y)=e^{-\langle x,y\rangle /2t}\Psi_{t,\sgn}(x,y)p_t^{(2)}(x-y)
$$ 
and $e^{-\langle x,y\rangle /2t}\Psi_{t,\sgn}(x,y)$ equals to 
$$
e^{-\langle x,y\rangle/2t}\\\Big(\sinh\big(\frac{x_1y_1}{2t}\big)\sinh\big(\frac{x_2y_2}{2t}\big)-
\sinh\big(\frac{x_2y_1}{2t}\big)\sinh\big(\frac{x_1y_2}{2t}\big)\Big).
$$
Our aim is to prove that the above expression is equivalent to the product of the four factors preceding $p_t^{(2)}(x-y)$ on the right-hand side of \eqref{sgn}. Obviously we can reduce our consideration to $t=1/2$ (exchanging $1/2$ with 1 in the four mentioned factors is easily justified). Thus, assume $t=1/2$ and note that the first two factors on the right-hand side  of \eqref{sgn} may be naturally singled out from the considered expression. This is because 
\begin{align*}
e^{-\langle x,y\rangle}\Psi_{1/2,\sgn}(x,y)
&=e^{-\langle x,y\rangle}\sinh\big(x_1y_1\big)\sinh\big(x_2y_2\big)
\Big(1-\frac{\sinh\big(x_1y_2\big)\sinh\big(x_2y_1\big)}{\sinh\big(x_1y_1\big)\sinh\big(x_2y_2\big)}\Big)\\
&=\frac14\big(1-e^{-2x_1y_1}\big)\big(1-e^{-2x_2y_2}\big)\Big(1-\frac{\sinh\big(x_1y_2\big)\sinh\big(x_2y_1\big)}{\sinh\big(x_1y_1\big)\sinh\big(x_2y_2\big)}\Big)\\
&\simeq \frac{x_1y_1}{x_1y_1+1}\,\frac{x_2y_2}{x_2y_2+1}\Big(1-\frac{\sinh\big(x_1y_2\big)\sinh\big(x_2y_1\big)}{\sinh\big(x_1y_1\big)\sinh\big(x_2y_2\big)}\Big),
\end{align*}
where in the final step we used $1-e^{-u}\simeq \frac u{u+1}$.

It now remains to show, and this is the essential part of the proof, that
\begin{equation}\label{ort}
1-\frac{\sinh\big(x_1y_2\big)\sinh\big(x_2y_1\big)}{\sinh\big(x_1y_1\big)\sinh\big(x_2y_2\big)}\simeq 
\frac{(x_1-x_2)(y_1-y_2)}{(x_1-x_2)(y_1-y_2)+1}\frac{x_1y_1}{x_1y_1+1}, \qquad x,y\in C_+^{(4)}.
\end{equation}
For the analysis of the left-hand side  of \eqref{ort}, and for $x,y\in C_+^{(4)}$, it is convenient to introduce the variables 
$$
X=x_1y_1, \quad Y=x_2y_1, \quad s=\frac{y_2}{y_1},
$$ 
so that $0<s<1$, $0<Y< X$ and $(x_1-x_2)(y_1-y_2)=(1-s)(X-Y)$, and to consider the function
$$
G(s,X,Y):=\frac{\sinh X\sinh(sY)-\sinh(sX)\sinh Y}{\sinh X \sinh(sY)}=1-\frac{\sinh(sX)}{\sinh X}\frac{\sinh Y}{\sinh(sY)}
$$
which for $0<s<1$, $0<Y<X$, corresponds to the left-hand side  of \eqref{ort}. The proof of \eqref{ort} will rely on a thorough analysis of the function $G$. 

We first observe that the right-hand side  of \eqref{ort}  suggests the natural splitting of further reasoning into the following three cases that 
depend on the configuration of $(x_1-x_2)(y_1-y_2)$ and $x_1y_1$  with respect to 1. 

\noindent \textbf{Case (1)} $x_1y_1\le1$; then also $(x_1-x_2)(y_1-y_2)<1$ and the right-hand side of \eqref{ort} can be replaced by $(x_1-x_2)(y_1-y_2)x_1y_1$.

\noindent \textbf{Case (2)} $(x_1-x_2)(y_1-y_2)\le1$ and $x_1y_1> 1$; the right-hand side  of \eqref{ort} can be replaced by $(x_1-x_2)(y_1-y_2)$. 

\noindent \textbf{Case (3)} $(x_1-x_2)(y_1-y_2)>1$; then also $x_1y_1>1$ and  the right-hand side  of \eqref{ort} can be replaced by $1$. 

We will analyse these cases separately rephrasing the resulting claims in terms of the $s,X$ and $Y$ variables; below it is understood that the bounds hold uniformly in $0<s<1$, $Y>0$, and $X$ satisfying displayed conditions.

\noindent \textbf{Claim 1} 
\begin{equation}\label{cl3}
G(s,X,Y)\simeq (1-s)(X-Y)X, \qquad Y<X\le1, 
\end{equation}

\noindent \textbf{Claim 2} 
\begin{equation}\label{cl2}
G(s,X,Y)\simeq (1-s)(X-Y), \qquad \max(1,Y)<X\le Y+\frac1{1-s};
\end{equation}

\noindent \textbf{Claim 3} 
\begin{equation}\label{cl1}
G(s,X,Y)\simeq 1, \qquad X>Y+\frac1{1-s}.
\end{equation}

We shall use the following simple lemma.
\begin{lemma} \label{lem:bas} 
Let $s\in(0,1)$. The function $\frac{\sinh sx}{\sinh x}$ is decreasing on $(0,\infty)$. 
\end{lemma}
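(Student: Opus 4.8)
The plan is to prove that $g(x):=\dfrac{\sinh(sx)}{\sinh x}$ is strictly decreasing on $(0,\infty)$ by a standard logarithmic-derivative computation. Writing $g=\exp\big(\log\sinh(sx)-\log\sinh x\big)$, it suffices to show that
\[
\frac{g'(x)}{g(x)}=s\coth(sx)-\coth x<0,\qquad x>0,
\]
that is, $s\coth(sx)<\coth x$ for all $x>0$ and $s\in(0,1)$.

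The key step is therefore to establish the pointwise inequality $u\coth u$ is increasing in $u>0$; granting this, since $sx<x$ we get $sx\coth(sx)<x\coth x$, and dividing by $x>0$ yields $s\coth(sx)<\coth x$, which is exactly what we need. To see that $h(u):=u\coth u$ is increasing, compute
\[
h'(u)=\coth u+u\cdot\big(-\operatorname{csch}^2 u\big)=\frac{\sinh u\cosh u-u}{\sinh^2 u}=\frac{\tfrac12\sinh(2u)-u}{\sinh^2 u},
\]
and $\sinh(2u)>2u$ for $u>0$ (immediate from the Taylor series, or from $\cosh$ being $>1$ so that $t\mapsto\sinh(2t)-2t$ has positive derivative and vanishes at $0$). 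Hence $h'(u)>0$ for $u>0$, completing the argument. An equivalent and perhaps cleaner route is to differentiate $g$ directly: $g'(x)\sinh^2 x=s\cosh(sx)\sinh x-\sinh(sx)\cosh x$, and using the product-to-sum identity \eqref{sin-cos} this equals $\tfrac12\big[s\big(\sinh((s+1)x)+\sinh((s-1)x)\big)-\big(\sinh((s+1)x)-\sinh((1-s)x)\big)\big]$; one then checks the bracket is negative for $0<s<1$, $x>0$, again reducing to the elementary fact that $\sinh$ is superadditive in the sense $\sinh(a+b)>\sinh a\cosh b$ fails to dominate once the smaller argument is scaled down—so I would in fact favor the $u\coth u$ formulation, which is shortest.

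There is no real obstacle here; the only mild subtlety is ensuring strictness (rather than just monotonicity), which is automatic since $\sinh(2u)-2u>0$ strictly for $u>0$. I would write the proof in the $u\coth u$ form: state that $h(u)=u\coth u$ satisfies $h'(u)=\operatorname{csch}^2 u\,\big(\tfrac12\sinh 2u-u\big)>0$, deduce $s\coth(sx)=\tfrac1x h(sx)<\tfrac1x h(x)=\coth x$, and conclude $\big(\log g\big)'(x)=s\coth(sx)-\coth x<0$, so $g$ is decreasing on $(0,\infty)$. This is a two or three line proof once the monotonicity of $u\coth u$ is in hand, and it is exactly the input needed for Claims 1--3 (where one repeatedly uses that $\sinh(sX)/\sinh X\le\sinh(sY)/\sinh Y$ for $Y<X$, i.e.\ that the ratio in $G$ lies in $(0,1)$ and is monotone in the right direction).
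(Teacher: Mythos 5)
Your proof is correct and follows essentially the same route as the paper: you reduce monotonicity of $\frac{\sinh(sx)}{\sinh x}$ (via the logarithmic derivative) to the inequality $s\coth(sx)<\coth x$, which is exactly the paper's condition $\frac{\tanh x}{x}<\frac{\tanh(sx)}{sx}$ in reciprocal form, i.e.\ the monotonicity of $u\coth u$ versus $\frac{\tanh u}{u}$. The only difference is that you also supply the short verification of this elementary monotonicity (via $\sinh 2u>2u$), which the paper merely cites.
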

\begin{proof}
Obviously,
\begin{equation*}
\Big(\frac{\sinh sx}{\sinh x}\Big)'(x)=\frac{s\cosh(sx)\sinh x-\sinh(sx)\cosh x}{\sinh^2 x}<0\Leftrightarrow \frac{\tanh x}x<\frac{\tanh(sx)}{sx}.
\end{equation*}
The latter inequality follows since $\frac{\tanh u}u$ is decreasing on $(0,\infty)$. 
\end{proof}

Before commencing to the proofs of these three claims we list obvious properties of $G$ that could be easily translated onto the corresponding properties 
of the expression on the left-hand side  in \eqref{ort}. Firstly,
$$
G(s,X,Y)>0,\qquad 0<s<1,\quad 0<Y<X.
$$
This immediately follows from Lemma \ref{lem:bas}. (We note at this point that this entails the positivity of $p_t^{{\rm det},C_+^{(4)}}(x,y)$ on $C_+^{(4)}$, 
the result proved in  \cite{S2} by slightly more involved argument). Secondly, for the limiting values, we have
$$
G(s,Y^+,Y)=0=G(s,X,X^-), \qquad 0<s<1,\quad X>0,\quad Y>0,
$$
and
$$
G(0^+,X,Y)=0=G(1^-,X,Y), \qquad  0<Y<X.
$$
These properties are equivalent to the statement that $\Psi_{t,\sgn}(x,y)$ vanishes (and thus also $p_t^{{\rm det},C_+^{(4)}}(x,y)$ vanishes), 
if any of $x,y$ belongs to the boundary of $C_+^{(4)}$. Also, for any fixed $0<s<1$ and $Y>0$, $G(s,X,Y)$ increases as a function of $X$ on 
the $X$-interval $(Y,\infty)$ (nota bene, from $G(s,Y^+,Y)=0$ to $G(s,\infty,Y)=1$), and for any fixed $0<s<1$ and $X>0$, $G(s,X,Y)$ decreases 
on the $Y$-interval $(0,X)$ (from $G(s,X,0^+)=1-\frac1s\frac{\sinh(sX)}{\sinh X}$ to $G(s,X,X^-)=0$). This is  a consequence 
of the fact that $\frac{\sinh u}{\sinh su}$ increases for $u\in(0,\infty)$.

We are now ready to begin the proofs of our Claims. 

\noindent \textit{Proof of Claim 1}. 
For the denominator in the fraction defining $G(s,X,Y)$  we have 
$$
\sinh X\sinh\big(sY\big)\simeq sXY,
$$
so the assertion to be proved is, uniformly in $-1<s<1, 0<Y<X\le 1$, 
\begin{equation}\label{duo}
\sinh X\sinh\big(sY\big)-\sinh\big(sX\big)\sinh Y\simeq s(1-s)(X-Y)X^2Y.
\end{equation}
Using \eqref{sin-sin} and \eqref{cosh}, we rewrite the left-hand side  of \eqref{duo} to the form
$$
\sinh\Big((1+s)\frac{X+Y}2\Big)\sinh\Big((1-s)\frac{X-Y}2\Big)-\sinh\Big((1-s)\frac{X+Y}2\Big)\sinh\Big((1+s)\frac{X-Y}2\Big).
$$
Then, dividing both sides of accordingly modified \eqref{duo} by $(1+s)\frac{X+Y}2\,(1-s)\,\frac{X-Y}2$ and replacing (for simplicity 
of notation) $X/2$ and $Y/2$ just by $X$ and $Y$, respectively, brings the relation 
\begin{align}\label{duo2}
S(s,X,Y)&:=\frac{\sinh (1+s)(X+Y)}{(1+s)(X+Y)}\frac{\sinh (1-s)(X-Y)}{(1-s)(X-Y)}-
\frac{\sinh (1-s)(X+Y)}{(1-s)(X+Y)}\frac{\sinh(1+s)(X-Y)}{(1+s)(X-Y)}\nonumber\\
 &\,\,\,\simeq sXY
\end{align}
to be proved uniformly in $-1<s<1, 0<Y<X\le 1$. 

We use
$$
\frac{\sinh u}u=\sum_{k=0}^\infty \frac{u^{2k}}{(2k+1)\,!}, \qquad u>0,
$$
and expand each of the four terms in $S=S(s,X,Y)$ 
to obtain
\begin{equation*}
S=\sum_{j=0}^\infty \frac{[(1+s)(X+Y)]^{2j}}{(2j+1)\,!}\sum_{k=0}^\infty \frac{[(1-s)(X-Y)]^{2k}}{(2k+1)\,!}
-\sum_{j=0}^\infty \frac{[(1-s)(X+Y)]^{2j}}{(2j+1)\,!}\sum_{k=0}^\infty \frac{[(1+s)(X-Y)]^{2k}}{(2k+1)\,!}.
\end{equation*}
Since each of these four series is absolutely convergent, we can use Cauchy's theorem on multiplication of series to write $S(s,X,Y)=$ as the 
absolutely convergent series
$$
S(s,X,Y)=\sum_{m=1}^\infty S_m(s,X,Y):=\sum_{m=1}^\infty \Big[ \sum_{j+k=m} c_{jk}\Big(L_{jk}(s,X,Y)-L_{jk}(-s,X,Y)\Big)\Big],
$$
where $c_{jk}=\big((2j+1)\,!(2k+1)\,!\big)^{-1}$, and
$$
L_{jk}(s,X,Y)=(1+s)^{2j}(1-s)^{2k}(X+Y)^{2j} (X-Y)^{2k}.
$$

Observe at this moment that for $j+k=m$, $m\ge1$, 
$$
L_{jk}(s,X,Y)-L_{kj}(-s,X,Y)=(1+s)^{2j}(1-s)^{2k}P_{jk}^{(m)}(X,Y),
$$ 
where 
$$
P_{jk}^{(m)}(X,Y)=(X+Y)^{2j}(X-Y)^{2k}-(X+Y)^{2k} (X-Y)^{2j}
$$ 
is  a polynomial of two variables of degree $2m$, except for the case when $j=k=m/2$ with $m$ even, when this polynomial is zero. Moreover, 
$P_{kj}^{(m)}=-P_{jk}^{(m)}$, $P_{jk}^{(m)}$ does not contain the monomials $X^{2m}$ and  $Y^{2m}$,  and the monomials of degree $\le1$. 
Therefore $P_{jk}^{(m)}$ can be written as $P_{jk}^{(m)}(X,Y)=XY\widehat P_{jk}^{(m)}(X,Y)$, where $\widehat P_{jk}^{(m)}$ is  a polynomial 
of degree $2(m-1)$, and  $\widehat P_{jk}^{(m)}(X,Y)>0$ for $0<Y<X$ and $k<j$.

Consequently, since $c_{jk}=c_{kj}$,
\begin{align*}
S_m(s,X,Y)&=\sum_{j=0}^m  c_{j,m-j}\big(L_{j,m-j}(s,X,Y)-L_{m-j,j}(-s,X,Y)\big)\\
&=\sum_{j=0}^m  c_{j,m-j}(1+s)^{2j}(1-s)^{2(m-j)}P_{j,m-j}^{(m)}(X,Y)\\
&=\sum_{j=0}^{\lfloor\frac{m-1}2\rfloor}  c_{j,m-j}P_{j,m-j}^{(m)}(X,Y) \big((1+s)^{2j}(1-s)^{2(m-j)}-(1+s)^{2(m-j)}(1-s)^{2j}\big)\\
&=\sum_{j=0}^{\lfloor\frac{m-1}2\rfloor}  c_{j,m-j} P_{j,m-j}^{(m)}(X,Y)P_{j,m-j}^{(m)}(1,s)\\
&=sXY\sum_{j=0}^{\lfloor\frac{m-1}2\rfloor}  c_{j,m-j} \widehat P_{j,m-j}^{(m)}(X,Y)\widehat P_{j,m-j}^{(m)}(1,s).
\end{align*}
Thus we singled out the required factor $sXY$ and finally, for $(s,X,Y)\in\R^3$,  
$$
\sum_{m=1}^\infty S_m(s,X,Y)=sXY\sum_{m=1}^\infty\Big[\sum_{j=0}^{\lfloor\frac{m-1}2\rfloor}  c_{j,m-j} 
\widehat P_{j,m-j}^{(m)}(X,Y)\widehat P_{j,m-j}^{(m)}(1,s)\Big]=:sXY\sum_{m=1}^\infty P_m(s,X,Y).
$$
The latter series is absolutely convergent, hence 
$$
P(s,X,Y):=\sum_{m=1}^\infty P_m(s,X,Y)
$$
is indeed well-defined, in addition, with positive terms and $P_1(s,X,Y)=16$. Since $P$ is continuous on $\R^3$ and separated from zero on 
$[0,1]\times \{(X,Y)\colon 0<Y<X\}$, we have $P(s,X,Y)\simeq 1$ uniformly in $0<s<1$ and $0<Y<X\le1$, and the proof of \eqref{duo2} and thus 
also \eqref{duo} is completed. (Continuity of $P$ follows from the fact that for any $r>0$, $P_m(s,X,Y)\le p_m(r)$ on $(-r,r)^3$, with 
$\sum_1^\infty p_m(r)<\infty$; such estimate is easily seen by using crude bounds of $\widehat P_{j,m-j}^{(m)}(X,Y)$ and $\widehat P_{j,m-j}^{(m)}(1,s)$ 
from the restricted range of $X,Y,s$, and suitable estimates of $c_{j,m-j}$, see Appendix (C), where similar case is discussed.)

\noindent \textit{Proof of Claim 2}. 
We shall prove that
\begin{equation}\label{new}
 \partial_XG(s,X,Y)\simeq 1-s,
\end{equation}
uniformly in $0<s<1$, $Y>0$, and $1\vee Y<X\le Y+\frac1{1-s}$. Using \eqref{new} gives  \eqref{cl2}, since
$$
G(s,X,Y)=\int_Y^X \partial_T G(s,T,Y)\,dT\simeq (1-s)(X-Y);
$$
recall that $G(s,Y^+,Y)=0$. It should be added at this point that the positivity of $ \partial_XG(s,X,Y)$ in the considered  range of variables 
is implicitly included in the proof of \eqref{new}. More precisely, it follows from positivity of $L_X(s)$; see the remark following \eqref{inacz}.

To obtain \eqref{new}, applying \eqref{sin-cos} for the third equality below, we write 
\begin{align*}
 &\,\,\,\,\,\,\,\partial_XG(s,X,Y)\\
&= \partial_X\Big(1-\frac{\sinh(sX)}{\sinh X}\frac{\sinh Y}{\sinh(sY)}\Big)\\
&=\frac{\sinh Y}{\sinh(sY)}\frac{\sinh(sX)\cosh X-s\cosh(sX)\sinh X}{\sinh^2X}\\
&=\frac12\frac{\sinh Y}{\sinh(sY)} \frac{\sinh\big((1+s)X\big)-\sinh\big((1-s)X\big) -s\big(\sinh\big((1+s)X\big)+\sinh\big((1-s)X\big)\big)}{\sinh^2X}\\
&=\frac12 \frac{\sinh Y\sinh(sX)}{\sinh^2X\sinh(sY)} \frac{(1-s)\sinh\big((1+s)X\big) -(1+s)\sinh\big((1-s)X\big)}{\sinh(sX)}\\
&=\frac12(1-s) \frac{\sinh Y\sinh(sX)}{\sinh^2X\sinh(sY)} \frac{\sinh\big((1+s)X\big) -\frac{1+s}{1-s}\sinh\big((1-s)X\big)}{\sinh(sX)}\\
&\simeq(1-s)\frac{\sinh Y \sinh(sX)}{\sinh^2X\sinh(sY)} e^X,
\end{align*}
where in the last step we used 
\begin{equation}\label{inacz}
L_X(s):=\frac{\sinh\big((1+s)X\big) -\frac{1+s}{1-s}\sinh\big((1-s)X\big)}{\sinh(sX)}\simeq e^X, \qquad X>1,\quad 0<s<1,
\end{equation}
the  equivalence to be proved in a moment. We add that positivity of $L_X(s)$ for $0<s<1$ and $X>0$ is included in the proof of \eqref{inacz}; 
see the final inequality concluding the proof of  \eqref{inacz}.

To finish the proof of \eqref{new} it remains to verify that
\begin{equation}\label{ble}
\frac{\sinh Y \sinh(sX)}{\sinh^2X\sinh(sY)} e^X\simeq 1,
\end{equation}
uniformly in $0<s<1$, $Y>0$, and $1\vee Y<X\le Y+\frac1{1-s}$.
Applying the basic assumption $0<(1-s)(X-Y)\le1$, we have 
\begin{align*}
\frac{\sinh Y \sinh(sX)}{\sinh^2X\sinh(sY)} e^X\simeq \frac{\frac Y{Y+1}e^Y \frac {sX}{sX+1}e^{sX}}{e^{2X}\frac {sY}{sY+1}e^{sY}}e^X
&=\frac X{Y+1}\frac{sY+1}{sX+1}e^{-(1-s)(X-Y)}\\
&\simeq \frac X{Y+1}\frac{sY+1}{sX+1}\simeq \frac{sY+1}{sX+1}\times
\begin{cases}
X,& Y\le1,\\
X/Y,& Y>1.
\end{cases}
\end{align*}
For $0<Y\le 1$, using $0<sY<1$, we obtain  
$$
X\frac{sY+1}{sX+1}\simeq
	\begin{cases}
		\frac1s, &\frac1s< X,\\
		X, &X\le \frac 1s.
	\end{cases}
$$
Now, if $\frac1s< X$, then $\frac1s< X<Y+\frac1{1-s}\le1+\frac1{1-s}$, which shows that the admitted $s$ satisfies $s>s_1=(3-\sqrt{5})/2$, 
and hence $\frac1s\simeq1$ follows. Similarly, if $X\le\frac1s$, then $X\le \min(\frac1s,1+\frac1{1-s})\le2$, hence $X\simeq1$, and \eqref{ble} follows. 
For $Y>1$, we obtain
$$
\frac XY\frac{sY+1}{sX+1}\simeq	\begin{cases}
		1, &Y>\frac1s,\\
		\frac1{sY}, &Y\le\frac1s<X,\\
		\frac XY, &Y<X\le\frac1s.
	\end{cases}
$$
But in the case $Y\le\frac1s<X$ we have $s<sY\le1$, and hence, for $\frac12<s<1$, $\frac1{sY}\simeq 1$ follows. For $0<s\le\frac12$, we have 
$Y<X<Y+2$, so $Y<\frac1s<Y+2$ and hence $1<\frac1{sY}<1+\frac2y<3$, and consequently, again $\frac1{sY}\simeq 1$ follows. 
Finally, in the case $Y<X\le\frac1s$, for $\frac12<s<1$ we have $1<y<x\le 2$, so $\frac XY\simeq1$, while for $0<s\le\frac12$ we have 
$1<Y<X<Y+2$, so  $1<\frac1{sY}<1+\frac2y<3$, and again $\frac XY\simeq1$. Summing up, also in the case $Y> 1$, \eqref{ble} follows. 

In this way we finished verification of \eqref{ble} and hence the proof of  \eqref{new}. We now return to the proof of \eqref{inacz}. We first 
show that $L_X(s)$ is increasing on $(0,1)$. 

Checking that $\frac d{ds}L_X>0$ on $(0,1)$ reduces to verifying the sign of the expression 
appearing in the numerator of $\frac d{ds}L_X$, namely
\begin{align*}
n_X(s)&=\Big(\sinh\big((1+s)X\big)-\frac{1+s}{1-s}\sinh\big((1-s)X\big)\Big)'\sinh(sX)\\
&\,\,-\Big(\sinh\big((1+s)X\big)-\frac{1+s}{1-s}\sinh\big((1-s)X\big)\Big)\big(\sinh(sX)\big)'.
\end{align*}
A routine calculation shows that
$$
n_X(s)=\frac{2sX^2}{1-s}\Big(\frac{\sinh X}X-\frac{\sinh(sX)}{sX}\,\frac{\sinh\big((1-s)X\big)}{(1-s)X}\Big)
$$
and the expression in the parentheses is indeed positive due to the easy to establish inequality
\begin{equation}\label{ine}
\frac{\sinh A}{A}\,\frac{\sinh B}{B}<\frac{\sinh(A+B)}{A+B}, \qquad A,B>0.
\end{equation}

Now, an application of de L'Hospital's rule gives for $X>1$ 
\begin{equation*}
L_{X}(0^+)=\lim_{s\to 0^+}\frac{\sinh\big((1+s)X\big)-\frac{1+s}{1-s}\sinh\big((1-s)X\big)}{\sinh sX}=2\cosh X-2\frac{\sinh X}X.
\end{equation*}
On the other hand,
\begin{equation*}
L_{X}(1^-)=\lim_{s\to 1^-}=\frac{\sinh\big((1+s)X\big)-X(1+s)\frac{\sinh\big((1-s)X\big)}{(1-s)X}} {\sinh sX}= 2\cosh X-2\frac X{\sinh X}.
\end{equation*}
Putting things together shows \eqref{inacz}, namely
$$
e^X\simeq 2\cosh X\Big(1-\frac{\tanh X}X\Big) \le L_X(s)\le 2\cosh X\simeq e^X, \qquad X>1.
$$

\noindent \textit{Proof of Claim 3}. 
The upper bound (by 1) in  \eqref{cl1} is obvious. For the lower bound we shall prove that
$$
1-\frac1{\sinh1}<1-\frac{\sinh(sX)}{\sinh X}\frac{\sinh Y}{\sinh(sY)}\,, 
$$
uniformly in $0<s<1$, $Y>0$, and $X>Y+\frac1{1-s}$. 

Since for given $0<s<1$ the function $\frac{\sinh(sX)}{\sinh X}$ decreases when $X$ increases, cf. Lemma \ref{lem:bas}, hence it remains to show that
$$
G_s(y):=\frac{\sinh s\big(y+\frac1{1-s}\big)\,\sinh y}{\sinh \big(y+\frac1{1-s}\big)\,\sinh sy}<\frac1{\sinh1},\qquad 0<s<1,\quad y>0.
$$
An application of the formula $\sinh(\a+\b)=\sinh\a\cosh\b+\cosh\a\sinh\b$ leads to
$$
G_s(y)=\frac{\sinh\frac s{1-s}}{\sinh\frac 1{1-s}}\,\frac{\coth sy+\coth \frac s{1-s}}{\coth y+\coth \frac 1{1-s}}.
$$
We now observe that 
$$
G_s(y)<\frac1s\frac{\sinh\frac s{1-s}}{\sinh\frac 1{1-s}}<\frac1{\sinh1}, \qquad 0<s<1,\quad y>0.
$$
Indeed, the first inequality above (note that for $0<s<1$, $\frac1s\frac{\sinh\frac s{1-s}}{\sinh\frac 1{1-s}}=G_s(0^+)$), after a rearrangement is
$$
s\coth sy+s\coth \frac s{1-s}<\coth y+\coth \frac 1{1-s},
$$
and this inequality is an immediate consequence of the fact that $s\coth(sA)<\coth A$ for $A>0$, which is equivalent to the statement that  $\frac{\tanh u}u$ 
is decreasing on $(0,\infty)$. On the other hand, the second required inequality (notice that $\frac1{\sinh1}=\lim_{s\to0^+}G_s(0^+)$) is a consequence of 
$$
\frac{\sinh1}1\frac{\sinh B}{B}<\frac{\sinh(1+B)}{1+B},
$$
taken with $B=\frac s{1-s}$,  which follows from \eqref{ine}. This also finishes the proof of the three Claims, hence the proof of \eqref{ort}, and thus Theorem \ref{thm:2}.

\begin{remark}\label{long2} 
It follows from \eqref{Neu} and Propositions \ref{pro:N2}, \ref{pro:N1}, and Theorem \ref{thm:2}, what the long-time decay of $p_t^{\eta, C_+^{(4)}}$ 
is for $\eta={\rm triv},\eta_{\calN_1},\eta_{\calN_1},\sgn$. Namely, given $x,y\in C_+^{(4)}$ we have for $t$ large,  
$p_t^{{\rm triv},C_+^{(4)}}(x,y)\simeq t^{-1}$, $p_t^{\eta_{\calN_1},C_+^{(4)}}(x,y)\simeq t^{-3}\simeq  p_t^{\eta_{\calN_2},C_+^{(4)}}(x,y)$, 
$p_t^{\sgn,C_+^{(4)}}(x,y)\simeq t^{-5}$. Again the number of facets of the Weyl chamber $C_+^{(4)}$ with imposed Dirichlet type boundary conditions matters. 
\end{remark}

\section{Dihedral root system $I_2(3)$ case} \label{sec:dih2}
For an introduction to this section see \cite[Section 5.3]{S2}. We only recall that the open cone on the plane 
$$
C_+^{(3)}=\big\{\rho e^{i \theta}\colon \rho>0,\,\,\,\,|\theta|<\pi/6\}=\{(x_1,x_2)\colon |x_2|<\frac1{\sqrt3}x_1\big\},
$$
with vertex at the origin and aperture $\pi/3$, is the distinguished Weyl chamber in this context. 

There are only two homomorphisms of the corresponding Weyl group, \textsl{triv} and \textsl{\sgn}, and the  Dirichlet heat kernel on $C_+^{(3)}$ is 
$$
p_t^{{\rm det},C_+^{(3)}}(x,y)=e^{-\langle x,y\rangle /2t}\Phi_{t,\sgn}(x,y)p_t^{(2)}(x-y),
$$
where
\begin{align*}
\Phi_{t,\sgn}(x,y)=\exp\big(\frac{x_2y_2}{2t}\big)\sinh\big(\frac{x_1y_1}{2t}\big)&-\exp\Big(\frac{(\sqrt{3}x_1-x_2)y_2}{4t}\Big)\sinh\Big(\frac{(x_1+\sqrt{3}x_2)y_1}{4t}\Big)\\
&-\exp\Big(\frac{(-\sqrt{3}x_1-x_2)y_2}{4t}\Big)\sinh\Big(\frac{(x_1-\sqrt{3}x_2)y_1}{4t}\Big). 
\end{align*}
Recall that by the general theory, the  Dirichlet heat kernel $p_t^{{\rm det},C_+^{(3)}}(x,y)$  is positive on $C_+^{(3)}$; equivalently, $\Phi_{t,\sgn}(x,y)$ 
is positive on $C_+^{(3)}$. This means that the left-hand sides in \eqref{est} and \eqref{est3} below, are positive on suitable regions. It is worth noting  
that positivity of the expression on the left-hand side of \eqref{est3}, and thus positivity of $p_t^{{\rm det},C_+^{(3)}}$, was proved by a direct 
computation in \cite[Section 5.3]{S2}. Moreover, $p_t^{{\rm det},C_+^{(3)}}(\cdot,y)$ vanishes on $\partial C_+^{(3)}$ for every $y\in C_+^{(3)}$. 

We claim that 
$$
p_t^{{\rm det},C_+^{(3)}}(x,y)\simeq  \frac{x_1y_1}{x_1y_1+t}\, \frac{(x_1-\sqrt3 x_2)(y_1-\sqrt3 y_2)}{(x_1-\sqrt3 x_2)(y_1-\sqrt3 y_2)+t}
\, \frac{(x_1+\sqrt3 x_2)(y_1+\sqrt3 y_2)}{(x_1+\sqrt3 x_2)(y_1+\sqrt3 y_2)+t}\,p_t^{(2)}(x-y),
$$
uniformly in $x=(x_1,x_2)\in C_+^{(3)},\,y=(y_1,y_2)\in C_+^{(3)}$, and  $t>0$. This amounts to showing that 
$$
e^{-\langle x,y\rangle /2t}\Phi_{t,\sgn}(x,y)
$$
is  equivalent to the product of the three terms preceding $p_t^{(2)}(x-y)$ in the claimed equivalence. Clearly, we can consider only $t=1/2$. Then
\begin{align*}
e^{-\langle x,y\rangle}&\Phi_{1/2,\sgn}(x,y)=e^{-\langle x,y\rangle} e^{x_2y_2}\sinh x_1y_1\times\\
&\Big(1-\frac{\exp\big(\frac{(\sqrt{3}x_1-x_2)y_2}{2}\big)}{e^{x_2y_2}}  \frac{\sinh\big(\frac{(x_1+\sqrt{3}x_2)y_1}{2}\big)}{\sinh x_1y_1}
-\frac{\exp\big(\frac{(-\sqrt{3}x_1-x_2)y_2}{2}\big)}{e^{x_2y_2}}  \frac{\sinh\big(\frac{(x_1-\sqrt{3}x_2)y_1}{2}\big)}{\sinh x_1y_1}\Big).
\end{align*}
But
$$
e^{-\langle x,y\rangle} e^{x_2y_2}\sinh x_1y_1=\frac12(1-e^{-2x_1y_1})\simeq  \frac{x_1y_1}{x_1y_1+1}
$$
hence one required term is singled out and we are reduced to showing that,  uniformly in $x,y\in C_+^{(3)}$, 
\begin{align}\label{est}
1-&\frac{ \exp\big(\frac{(\sqrt 3x_1-3x_2)y_2}2\big)\sinh\big(\frac{(x_1+\sqrt{3}x_2)y_1}2\big) +
\exp\big(-\frac{(\sqrt 3x_1+3x_2)y_2}2\big)\sinh\big(\frac{(x_1-\sqrt{3}x_2)y_1}2\big)}{\sinh x_1y_1}\simeq \,\,\,\,\,\,\,\,\,\,\,\,\nonumber\\
&\,\,\,\,\,\,\,\,\,\,\,\,\,\,\,\,\,\,\,\,\,\,\,\,\frac{(x_1-\sqrt3 x_2)(y_1-\sqrt3 y_2)}{(x_1-\sqrt3 x_2)(y_1-\sqrt3 y_2)+1}\,\frac{(x_1+\sqrt3 x_2)(y_1+\sqrt3 y_2)}{(x_1+\sqrt3 x_2)(y_1+\sqrt3 y_2)+1}.
\end{align}
For $x,y\in C_+^{(3)}$ we will use a modification of variables introduced in the preceding section,
$$
X=\frac{(x_1-\sqrt 3x_2)y_1}2, \quad Y=\frac{(x_1+\sqrt 3x_2)y_1}2, \quad s=\sqrt3 \frac{y_2}{y_1},
$$ 
so that $|s|<1$, $X>0$, $Y>0$. In these new variables, \eqref{est} is equivalent to 
\begin{equation}\label{est3}
 G(s,X,Y):=1-\frac{\exp(sX)\sinh Y+ \exp(-sY)\sinh X}{\sinh(X+Y)}\simeq \frac{(1-s)X}{(1-s)X+1}\frac{(1+s)Y}{(1+s)Y+1},
\end{equation}
uniformly in $X,Y>0$, $|s|<1$. Recall that $G$ is positive in the truncated layer 
$$
\calL_{\rm tr}=(-1,1)\times(0,\infty)\times(0,\infty),
$$
and so is the expression in \eqref{lala2} below, denoted $S(s,X,Y)$  in the beginning of Section \ref{ssec:first}. Moreover, $G$ vanishes on the boundary of 
$\calL_{\rm tr}$, which is equivalent to the fact that  $\Phi_{1/2,\sgn}(x,y)$ vanishes if either $x$ or $y$ belong to the boundary of $C_+^{(3)}$. 

To prove \eqref{est3} we shall consider the main cases, when $X+Y<1$ or $X+Y\ge1$, and four subcases in the latter case; this will be done in two 
subsequent sections. If $X+Y<1$, then also $(1-s)X<2$ and $(1+s)Y<2$ and,  due to $\sinh(X+Y)\simeq X+Y$, \eqref{est3} becomes
\begin{equation}\label{lala2}
\sinh(X+Y) -\big(\exp(sX)\sinh Y+ \exp(-sY)\sinh X\big) \simeq (1-s^2)XY(X+Y).
\end{equation}
If $X+Y\ge1$, then \eqref{est3} becomes
\begin{equation}\label{lala5}
G(s,X,Y) \simeq	\begin{cases}
  (1-s^2)XY, &(1-s)X<1,\,(1+s)Y<1,\,\,\,\, (A)\\
		(1-s)X, &(1-s)X<1,\,(1+s)Y\ge1,\,\,\,\, (B)\\
		(1+s)Y, &(1-s)X\ge1,\,(1+s)Y<1,\,\,\,\, (C)\\
		1, &(1-s)X\ge1,\,(1+s)Y\ge1.\,\,\,\, (D)
	\end{cases}
\end{equation} 
Notice that due to $G(s,Y,X)=G(-s,X,Y)$ the second and third cases are symmetric. 

In what follows, to avoid a multiplication of symbols, in the following two subsections we shall use the same characters (some of them already used in 
Section \ref{sec:dih}), like $S$, $P$, $\calR$, and their variants, to denote different object but related to similar contexts. For instance, $\calR$ will 
always stand for a region defined by conditions imposed on $s$, $X$, and $Y$.

\subsection{The  case $X+Y<1$} \label{ssec:first}
We commence proving \eqref{lala2} with the leading assumption $X+Y<1$. For $s,X,Y$ real, denoting by $S(s,X,Y)$ the left-hand side of \eqref{lala2}, we have 
\begin{equation*}
S(s,X,Y)=\sum_{k=0}^\infty\frac{(X+Y)^{2k+1}}{(2k+1)!}-\Big[\sum_{k=0}^\infty\frac{(sX)^{k}}{k!}\sum_{k=0}^\infty\frac{Y^{2k+1}}{(2k+1)!} + 
\sum_{k=0}^\infty\frac{(-sY)^{k}}{k!}\sum_{k=0}^\infty\frac{X^{2k+1}}{(2k+1)!} \Big].
\end{equation*}
We will show that 
\begin{equation}\label{AA2}
S(s,X,Y)=(1-s^2)XY(X+Y)P(s,X,Y),\qquad (s,X,Y)\in\R^3,
\end{equation}
where 
\begin{equation}\label{AA3}
P(s,X,Y)\simeq 1, \qquad (s,X,Y)\in\calR,
\end{equation}
and
$$
\calR=(-1,1)\times\{(X,Y)\colon X>0,Y>0,X+Y<1\}.
$$ 

Expanding the summands $(X+Y)^{2k+1}$ and using the Cauchy theorem mentioned in Section \ref{sec:dih}, we rearrange the resulting summands and obtain
$$
S(s,X,Y)=\sum_{m=3}^\infty S_m(s,X,Y), 
$$
where, for any $m\ge3$, $S_m(s,X,Y)$ is a sum of monomials in the variables $X$ and $Y$ of joint degree $m$, and coefficients depending 
polynomially on $s$. Moreover, the last series  converges  absolutely. Notice that $S$ does not include a constant 
term, and monomials of joint degree 1 and 2 do not appear due to an easily seen cancellation. First, we aim to show that for every $m\ge3$,
\begin{equation}\label{qq}
S_m(s,X,Y)=(1-s^2)XY(X+Y)P_{m}(s,X,Y), \qquad (s,X,Y)\in\R^3,
\end{equation}
where, for any fixed $s$, $P_{m}(s,\cdot,\cdot)$ is a polynomial of two variables $X$ and $Y$ of joint degree $m-3$. We now begin the analysis of $S_m$ 
splitting the consideration onto odd and even cases.

Suppose first that $m\ge3$ is odd. Then $S_m$ incorporates monomials in $X$ and $Y$ of joint degree $m$ of two kinds. Firstly, we have the terms coming from 
the expansion of $(X+Y)^{m}$, $m=2k+1$, except for the monomials $X^m$ and $Y^m$ which are canceled (see below). Therefore, the sum of monomials of the first kind is    
\begin{equation*}
S_m^{(1)}=\sum_{j=1}^{m-1}\frac{X^jY^{m-j}}{j!(m-j)!}
=XY\sum_{j=0}^{m-2}\frac{X^jY^{m-j-2}}{(j+1)!(m-j-1)!}=:XYJ_m^{(1)}.
\end{equation*}
Secondly, included are the monomials that come from  multiplication of  terms with even exponents from the series representing $\exp(sX)$ or $\exp(-sY)$, 
and a term in the series representing $\sinh Y$ or $\sinh X$, respectively, except for the monomials $X^m$ and $Y^m$ which are used to the just mentioned cancellation. Therefore, not including at this moment the minus sign, the sum of monomials of the second  kind is    
\begin{align*}
S_m^{(2)}&=\sum_{j=1}^{\frac{m-1}2}\Big[\frac{(sX)^{2j}}{(2j)!} \frac{Y^{m-2j}}{(m-2j)!} + \frac{(-sY)^{2j}}{(2j)!} \frac{X^{m-2j}}{(m-2j)!}\Big]\\
&=XY\sum_{j=0}^{\frac{m-3}2}\frac{s^{2(j+1)}}{(2(j+1))!(m-2j-2)!}\big(X^{2j+1}Y^{m-3-2j}+Y^{2j+1}X^{m-3-2j}\big)=:XYJ_m^{(2)}.
\end{align*}
Putting all these monomials together gives  $S_m=S_m^{(1)}-S_m^{(2)}=XY\big(J_m^{(1)}-J_m^{(2)}\big)$; thus, $XY$ is singled out. Now, in the sum defining 
$J_m^{(1)}$ we pair the terms with indices $2j$ and $m-2-2j$, $j=0,\ldots,\frac{m-3}2$, and subtract the $(\frac{m-3}2-j)$-th term from $J_m^{(2)}$ to obtain
\begin{align*}
J_m^{(1)}-J_m^{(2)}&=\sum_{j=0}^{\frac{m-3}2}\frac{1-(s^2)^{\frac{m-1}2-j}}{(2j+1)!(m-2j-1)!}\Big(X^{2j}Y^{m-2-2j}+Y^{2j}X^{m-2-2j} \Big)\\
&=(1-s^2)(X+Y)\sum_{j=0}^{\frac{m-3}2}\frac{Q_{m,j}(s)R_{m,j}(X,Y)}{(2j+1)!(m-2j-1)!},
\end{align*}
where (notice that $\frac{m-1}2-j\ge1$ for $0\le j\le \frac{m-3}2$)
$$
Q_{m,j}(s):=\frac{1-(s^2)^{\frac{m-1}2-j}}{1-s^2}, \qquad R_{m,j}(X,Y)=\frac{X^{2j}Y^{m-2-2j}+Y^{2j}X^{m-2-2j}}{X+Y}. 
$$
Letting $Q_{m,j}(\pm1)=\frac{m-1}2-j$ makes $Q_{m,j}$ continuous on $\R$; $Q_{m,j}$ is then an even polynomial of degree $m-3-2j$, positive on $\R$. 
Set $r(j,m):=\min(2j, m-2-2j)$ so that $r(0,m)=0$ and $r(j,m)\ge1$ for $j=1,\ldots, \frac{m-3}2$. Observing that $m-2-2r(j,m)\ge1$ is odd shows that 
$R_{m,j}(X,Y)$ equals
$$
(XY)^{r(j,m)}\frac{X^{m-2-2r(j,m)}+Y^{m-2-2r(j,m)}}{X+Y}=(XY)^{r(j,m)}\sum_{i=0}^{m-3-2r(j,m)}(-1)^{i}X^{m-3-2r(j,m)-i}Y^i. 
$$
Concluding, for $m$ odd $S_m$  is of the form \eqref{qq} with 
\begin{equation}\label{pmo}
P_m(s,X,Y)=\sum_{j=0}^{\frac{m-3}2}\frac{Q_{m,j}(s)R_{m,j}(X,Y)}{(2j+1)!(m-2j-1)!}.
\end{equation}
Notice that $R_{m,j}$ are polynomials in $X$ and $Y$, positive for $X,Y>0$, and of degree $m-3$, hence  $P_m(s,X,Y)$ is a polynomial in $X$ and $Y$ 
of degree $m-3$ with coefficients depending on $s$, positive for $s\in\R$ and $X,Y>0$; for instance $P_3\equiv\frac12$. 

Assume now that $m\ge4$ is even. Then $S_m$ incorporates all monomials in $X$ and $Y$ of joint degree $m$ that come from multiplication of a term with odd 
exponent from the series representing $\exp(sX)$ and a term in the series representing $\sinh Y$, or a term with odd exponent from the series 
representing $\exp(-sY)$ and a term in the series representing $\sinh X$,  respectively. Thus, 
\begin{align*}
S_m&=-\sum_{j=0}^{\frac m2-1}\Big[\frac{(sX)^{2j+1}}{(2j+1)!}\frac{Y^{m-2j-1}}{(m-2j-1)!} + \frac{(-sY)^{2j+1}}{(2j+1)!}\frac{X^{m-2j-1}}{(m-2j-1)!} \Big]\\
&=XY\sum_{j=0}^{\frac{m-2}2} \frac{s^{2j+1}}{(2j+1)!(m-2j-1)!}\Big[Y^{2j}X^{m-2j-2} - X^{2j}Y^{m-2j-2} \Big]=:XYI_m.
\end{align*}
Now, if $\frac m2$ is odd, then the middle term in the above summation, i.e. the term corresponding to $j=\frac{m-2}4$, vanishes. 
Therefore, whatever $\frac m2$ is, pairing in $I_m$ the terms with indices $j$ and $\frac{m-2}2-j$ gives
\begin{align*}
I_m&=s\sum_{j=0}^{\lfloor \frac m4\rfloor-1} \frac{s^{2j}}{(2j+1)!(m-2j-1)!}\big(1-(s^2)^{\frac{m-2}2-2j}\big)\Big[Y^{2j}X^{m-2j-2} - X^{2j}Y^{m-2j-2} \Big]\\
&=(1-s^2) s\sum_{j=0}^{\lfloor \frac m4\rfloor-1} \frac{Q_{m,j}(s)}{(2j+1)!(m-2j-1)!}(XY)^{2j}\big(X^{m-4j-2} - Y^{m-4j-2}\big)\\
&=(1-s^2) (X+Y)(X-Y)s\sum_{j=0}^{\lfloor \frac m4\rfloor-1} \frac{Q_{m,j}(s)R_{m,j}(X,Y)}{(2j+1)!(m-2j-1)!},
\end{align*}
where (notice that $\frac{m}2-2j-1\ge1$ for $0\le j\le \lfloor\frac{m}4 \rfloor-1$)
$$
Q_{m,j}(s):=s^{2j}\frac{1-(s^2)^{\frac m2-2j-1}}{1-s^2}
$$
is, after appropriate adjustment at $\pm1$, an even positive on $\R$ polynomial of degree $m-4-4j$,  and
$$
R_{m,j}(X,Y):=(XY)^{2j}\frac{(X^2)^{\frac m2-2j-1} - (Y^2)^{\frac m2-2j-1}}{X^2-Y^2}=(XY)^{2j}\sum_{i=0}^{\frac m2-2j-2}(X^2)^{\frac m2-2j-2-i}(Y^2)^{i}.
$$
Thus, for $m$ even, $S_m=XYI_m$ is of the form \eqref{qq} with
\begin{equation}\label{pme}
P_m(s,X,Y)= s(X-Y)\sum_{j=0}^{\lfloor \frac m4\rfloor-1} \frac{Q_{m,j}(s)R_{m,j}(X,Y)}{(2j+1)!(m-2j-1)!}.
\end{equation}
Since  $Q_{m,j}$ are positive  and $R_{m,j}$ are positive for $X,Y\in\R$, the last sum is positive for $s\in\R$ and $X,Y\in\R$, and thus the sign of 
$P_m(s,X,Y)$ is determined by the sign of $s(X-Y)$. For instance $P_4\equiv\frac16s(X-Y)$.

Finally, we reached \eqref{AA2} with 
$$
P(s,X,Y):=\sum_{m=3}^\infty P_m(s,X,Y),  \qquad (s,X,Y)\in\R^3,
$$
and $P_m$ given by \eqref{pmo} and \eqref{pme}. To complete the proof of \eqref{lala2}, and finish the proof of \eqref{est} in the case $X+Y<1$, it remains to check \eqref{AA3}; recall that $P$ is positive on $\calL_{\rm tr}$ because $S$ is. 

The series $\sum_{m=3}^\infty P_m(s,X,Y)$ is pointwise absolutely convergent (because $\sum_3^\infty S_m$ is) and its sum  is well-defined. 
For continuity of $P$ it suffices to check that the series is uniformly convergent on every cube $(-r,r)^3$, $r\to\infty$. This can be done by a direct calculation; see  Appendix (C) for details.

With continuity of $P$ on $\R^3$, we now argue as follows. On the compact set $\overline{\calR}$, the closure of $\calR$,
$P$ is bounded from above which proves the required upper bound of $P$ on $\calR$. For the lower bound we shall use an argument of a topological flavor. Namely, 
since $P(s,X,Y)$ is positive on $\calR$, it suffices to verify that $P$ does not vanish on the boundary of $\calR$. Continuity then implies 
that $P$ is separated from zero on $\calR$. For the proof that $P\neq0$, more precisely $P>0$, on $\partial \calR$ we split the argument according to the suitable partition of the boundary and check (slightly more than necessary) that:
\begin{enumerate}
	\item $P(s,0,0)>0$ for $-1\le s\le 1$;
	\item $P(\pm1,X,0)>0$ for $0<X\le1$, or $P(\pm1,0,Y)>0$ for $0<Y\le1$;
	\item $P(s,X,0)>0$ for $-1<s<1$ and $X>0$, or $P(s,0,Y)>0$ for $-1<s<1$ and $Y>0$;
	\item $P(\pm1,X,Y)>0$ for $X>0$ and $Y>0$.
\end{enumerate}
Notice that the conclusions for the second parts of Items (2) and (3) follow from the conclusions for the first parts due to the equality $P(s,Y,X)=P(-s,X,Y)$, hence we shall discuss only the first parts. Observe also that we do not discuss the part of the boundary $\partial\calR$ being the intersection of the 
plane $X+Y=1$ in $\R^3$ with $\calL_{\rm tr}$, because $P>0$ there.

For the proofs of Items (1) and (2) we need a formula on $P_m(s,X,0)$ that follows from \eqref{pmo} and \eqref{pme} and accompanying formulas for 
$R_{m,j}$ (note that $R_{m,j}(X,0)=0$ for $j\ge1$). Namely, for $s,X\in\R$, 
$$
P_m(s,X,0)=\frac{Q_{m,0}(s)}{(m-1)!}\times 
	\begin{cases}
		1, & m\ge3 \,\,{\rm odd},\\
		sX^{m-3}, & m\ge4 \,\,{\rm even}.
	\end{cases}
$$

\noindent \textbf{Item (1)}. Using the above, the positivity of $Q_{m,j}$ gives for $-1\le s\le 1$
$$
P(s,0,0)=\sum_{m=3}^\infty P_m(s,0,0)=\sum_{k=1}^\infty \frac{Q_{2k+1,0}(s)}{(2k)!}>0.
$$

\noindent \textbf{Item (2)}. This time, since that $Q_{2k+1,0}(\pm1)=k$ and $Q_{2k,0}(\pm1)=k-1$, we have for $0<X\le1$
\begin{align*}
P(\pm1,X,0)&=\sum_{m=3}^\infty P_m(\pm1,X,0)=\sum_{k=1}^\infty \frac{Q_{2k+1,0}(\pm1)}{(2k)!}\pm\sum_{k=2}^\infty \frac{Q_{2k,0}(\pm1)X^{2k-3}}{(2k-1)!}\\
&=\sum_{k=1}^\infty \frac{k}{(2k)!}\pm\sum_{k=2}^\infty\frac{(k-1)X^{2k-3}}{(2k-1)!}>\frac e2-\sum_{k=1}^\infty\frac{k}{(2k+1)!}
=\frac e2-\frac12\big(1-\frac1e\big)=\cosh1-\frac12>0.
\end{align*}

For the proofs of Items (3) and (4) our strategy is as follows. Let us focus on the first part of Item (3). 
The function $S$ is real-analytic at every point $(s,X,Y)\in\R^3$, so with $-1<s<1$ and $X>0$ fixed, $S(s,X,\cdot)$ is a real-analytic function 
of one variable with  $Y=0$ as a root. Since $S(s,X,Y)=XY(X+Y)(1-s^2)P(s,X,Y)$,  checking that $\partial_Y S(s,X,Y)_{Y=0}\neq0$ will imply that  
$Y=0$ is a simple root of $S(s,X,\cdot)$ and thus $P(s,X,0)\neq0$. Positivity of $P$ in $\calR$ actually shows that $P(s,X,0)>0$.

\noindent \textbf{Item (3)}. Fix $s\in(-1,1)$ and $X>0$. We shall check that $Y=0$ is a simple root of the function $\Phi(Y)=S(s,X,Y)$. 
We have $\Phi'(Y)=\cosh(X+Y)-e^{sX}\cosh Y+se^{-sY}\sinh X$, hence  $\Phi'(0)=\cosh X-e^{sX}+s\sinh X$. We claim that $\Phi'(0)>0$. 
Let $f(s)=\cosh X-e^{sX}+s\sinh X$, with $X>0$ fixed and $s$ freed, so that $f'(s)=-Xe^{sX}+\sinh X>0$ if and only if $\frac{\sinh X}X>e^{sX}$. 
Hence, $f(s)$ increases on $(-1,s_0)$ and decreases on $(s_0,1)$, where $s_0\in (0,1)$ is such that $\frac{\sinh X}X=e^{s_0X}$. This together 
with $f(\pm1)=0$ proves the claim.

\noindent \textbf{Item (4)}. Fix $X>0$ and $Y>0$. We shall check that $s=\pm1$ is a simple root of the function $\Psi(s):=S(s,X,Y)$. 
More precisely, we shall check that $\Psi'(\pm1)\neq0$. Indeed, $\Psi'(s)=-Xe^{sX}\sinh Y+Ye^{-sY}\sinh X$ and 
$\Psi'(1)=-Xe^{X}\sinh Y+Ye^{-Y}\sinh X<0$ since $\frac{\sinh X}{Xe^X}<\frac{e^Y\sinh Y}{Y}$ for $X,Y>0$ ($\frac{\sinh X}{Xe^X}$ decreases 
on $(0,\infty)$, while $\frac{e^Y\sinh Y}{Y}$ increases on $(0,\infty)$). Analogous argument shows that $\Psi'(-1)>0$.
 
\subsection{The case $X+Y\ge1$} \label{ssec:second}

We now continue and prove \eqref{lala5} with the leading assumption $X+Y\ge1$. 

\vskip 0.4cm
\noindent \textbf{Case  (A)}: $(1-s)X<1,\,(1+s)Y<1$. Since now $\sinh(X+Y)\simeq \exp(X+Y)$, it follows that \eqref{lala5} (A)  is equivalent to 
$$
\frac{\sinh(X+Y)}{e^{X+Y}} - \Big[e^{-(1-s)X}\frac{\sinh Y}{e^Y}+ e^{-(1+s)Y}\frac{\sinh X}{e^X}\Big]   \simeq (1-s^2)XY.
$$
But 
$$
\frac{\sinh \a}{\exp \a}=\frac12\big(1-e^{-2\a}\big), 
$$
so  neglecting the factor $\frac12$ and with the change of variables $X\to X/2$, $Y\to Y/2$, our goal is
\begin{equation}\label{A1}
S(s,X,Y):=1-e^{-(X+Y)}-e^{-\frac{1-s}2 X}\big(1-e^{-Y}\big)-e^{-\frac{1+s}2 Y}\big(1-e^{-X}\big)\simeq (1-s^2)XY,
\end{equation}
uniformly in $s,X,Y$ from the region 
$$
\calR=\Big\{(s,X,Y)\colon |s|<1,\, X+Y\ge2,\, \frac{1-s}2X<1,\,\frac{1+s}2Y<1\Big\}.
$$ 
To achieve the goal we shall apply the method already used in the case $X+Y<1$. It is worth adding at this point that $P$ is continuous and the arguments 
for this are analogous to these establishing continuity of $P$ considered in Section \ref{ssec:first}.

Observe that $S$ is a real-analytic function on $\R^3$, positive in the truncated layer $\calL_{\rm tr}$
and vanishing on the boundary of $\calL_{\rm tr}$, and we have
\begin{equation*}
S(s,X,Y)=
-\sum_{m=1}^\infty \frac{(-(X+Y))^m}{m!}+ \Big[\sum_{n=0}^\infty\frac{\big(-\frac{1-s}2X \big)^n}{n!}  \sum_{n=1}^\infty  \frac{(-Y)^n}{n!}
+\sum_{n=0}^\infty\frac{\big(-\frac{1+s}2 Y \big)^n}{n!}  \sum_{n=1}^\infty  \frac{(-X)^n}{n!}\Big]. 
\end{equation*}
We will show, and this is sufficient to obtain  \eqref{A1}, that 
\begin{equation}\label{A2}
S(s,X,Y)=(1-s^2)XY P(s,X,Y),\qquad (s,X,Y)\in\R^3,
\end{equation}
where 
\begin{equation}\label{A3}
P(s,X,Y)\simeq 1, \qquad (s,X,Y)\in\calR.
\end{equation}

Expanding the summands $(-(X+Y))^m$ and using the Cauchy theorem mentioned before, we rearrange the resulting summands and obtain
$$
S=\sum_{m=3}^\infty S_m, 
$$
where, for any fixed $s$, $S_m=S_m(s,X,Y)$ is a sum of monomials in the variables $X$ and $Y$ of joint degree $m$, and coefficients depending on $s$, with 
the last series being absolutely convergent. Notice that $S$ does not include a constant term, and monomials of joint degree 1 or 2 do not appear due to an easily seen cancellation. 

For the analysis of $S_m$, $m\ge3$, we first note that 
\begin{align*}
S_m&=\sum_{j=0}^{m-1}\frac{1}{j!(m-j)!}\Big[\big(-\frac{1-s}2X\big)^j(-Y)^{m-j}+\big(-\frac{1+s}2Y\big)^j(-X)^{m-j}  
\Big]-(-1)^m\sum_{j=0}^{m}\frac{X^jY^{m-j}}{ j!(m-j)!}\\
&=(-1)^m\sum_{j=1}^{m-1}\frac{1}{j!(m-j)!} \Big[ \big(\frac{1-s}2\big)^jX^jY^{m-j}+  \big(\frac{1+s}2\big)^jY^jX^{m-j}-X^jY^{m-j} \Big].
\end{align*}
Two summands from the first sum above corresponding to $j=0$, i.e. $\frac1{m!}(-Y)^m$ and $\frac1{m!}(-X)^m$, are canceled by the summands from 
the second sum that correspond to $j=0$ and $j=m$. 

We now continue rearranging the latter sum. For this, assuming for a moment that $m$ is odd, we pair the terms in brackets 
and corresponding to the indices $j$ and $m-j$, $0\le j\le\frac{m-1}2$, to obtain 
\begin{equation*}
\Big(\big(\frac{1-s}2\big)^j + \big(\frac{1+s}2\big)^j-1\Big)X^jY^{m-j}+ \Big(\big(\frac{1+s}2\big)^j + \big(\frac{1-s}2\big)^j-1\Big)Y^jX^{m-j}.
\end{equation*}
With the notation
$$
\widehat Q_{m,j}(s)=\Big(\frac{1-s}2\Big)^j+\Big(\frac{1+s}2\Big)^{m-j}-1,
$$
we obtain for $m\ge3$ odd
$$
S_m=(-1)^m\sum_{j=1}^{\frac{m-1}2}\frac{(XY)^j }{j!(m-j)!} \Big(\widehat Q_{m,j}(s)Y^{m-2j}+ \widehat Q_{m,j}(-s)X^{m-2j}  \Big).
$$
If $m\ge4$ is even, then the above formula remains essentially the same with the following modification: the summation now goes to $\frac m2$ and the sum 
of the last terms in the parentheses, corresponding to $j=\frac m2$, is the one-half of the previous one, i.e.  takes the form $\widehat Q_{m,\frac m2}(s)$ 
(in place of $ \widehat Q_{m,\frac m2}(s)+ \widehat Q_{m,\frac m2}(-s)$; notice that  $ \widehat Q_{m,\frac m2}(s)=\widehat Q_{m,\frac m2}(-s)$).

For $m\ge3$ and $j=1,2,\ldots, m-1$, $s=\pm1$ are roots of the polynomial $\widehat Q_{m,j}$, hence we define
$$
Q_{m,j}(s):=(-1)^m 4\frac{\widehat Q_{m,j}(s)}{1-s^2},
$$
which is a polynomial of degree $(j-2)\vee(m-j-2)$, and $Q_{m,j}(s)=Q_{m,m-j}(-s)$. A closer look at the structure of $Q_{m,j}$ reveals that 
$$
Q_{m,j}(s)=(-1)^{m+1}2^{m}\Big(\sum_{k=0}^{m-j-2}\big(\frac{1+s}2\big)^k+ \sum_{k=0}^{j-2}\big(\frac{1-s}2\big)^k \Big);
$$
here, if $m-j-2<0$ or $j-2<0$, then the corresponding sum in not taken into account. Thus, for instance, $Q_{3,1}=Q_{3,2}=8$, $Q_{4,1}=-8(3+s)$, $Q_{4,2}=-32$, 
$Q_{4,3}=-8(3-s)$.

With this notation we further rearrange $S_m$ to the form
$$
S_m=XY(1-s^2)P_m(s,X,Y),  
$$
where
$$
P_m(s,X,Y)=\sum_{j=1}^{\lfloor\frac{m}2\rfloor}\frac{Q_{m,j}(s)X^{j-1}Y^{m-j-1}+Q_{m,j}(-s)X^{m-j-1}Y^{j-1}}{j!(m-j)!},
$$
again with the convention that for $m$ even the latter sum in the numerator of the two terms  corresponding to $j=\frac m2$, is the one-half 
of the original one, i.e. equals $Q_{m,\frac m2}(s)(XY)^{m/2-1}$. For instance, $P_3=4(X+Y)$. Hence we proved  \eqref{A2} with 
$P(s,X,Y):=\sum_{m=3}^\infty P_m(s,X,Y)$ and now it remains to justify \eqref{A3}.

For this we shall use still another change of variable, i.e. $T=\frac{1-s}2X$ and $U=\frac{1+s}2Y$, which will lead to a compact set framework. More precisely, we consider the mapping $\Phi$ defined on the layer $\calL=(-1,1)\times\R\times\R$, 
$$
\Phi(s,T,U)=\Big(s,\frac{2T}{1-s}, \frac{2U}{1+s}\Big), \qquad (s,T,U)\in\calL.
$$
Obviously, $\Phi$ is a continuous bijection of $\Omega^*:=(-1,1)\times[0,\infty)\times[0,\infty)$ onto itself ($\Omega^*$ enlarges $\calL_{\rm tr}$ by a part of its boundary), which preserves the mentioned part of the boundary, and the restriction of $\Phi$ to $\calL_{\rm tr}$ is a bijective diffeomorphism of $\calL_{\rm tr}$. From now on the symbol `$*$' will be related to objects connected to the $(s,T,U)$ variables. Then we define the 
mapping $S^*=S\circ \Phi$ on $\calL$. Explicitly, 
$$
S^*(s,T,U)=1-e^{-\frac{2T}{1-s}-\frac{2U}{1+s}}-e^{-T}\big(1-e^{ -\frac{2U}{1+s}} \big)-e^{-U}\big(1-e^{ -\frac{2T}{1-s}}\big).
$$

Clearly, $S^*$ is positive on $\calL_{\rm tr}$ and vanishes on this part of $\partial \Omega^*$ on which  $S^*$ is defined. A closer look at the definition 
of $S^*$ reveals that $S^*$ can be extended to a continuous function on $\overline{\Omega^*}=\overline{\calL_{\rm tr}}$ by setting
\begin{equation}\label{big2}
S^*(\pm1,T,U)=(1-e^{\mp T})(1-e^{\mp U}), \qquad T,U\ge0;
\end{equation}
we shall use the same character $S^*$ to denote this extension. Notice that on these parts of the boundary of $\Omega^*$ where $s=\pm1$, excluding edges, $S^*$ is positive. 

With the notation $P^*=P\circ\Phi$ (notice that $P^*$ is continuous on $\calL$ because $P$ is continuous on $\R^3$), \eqref{A2} implies 
\begin{equation}\label{A5}
 S^*(s,T,U)=4TU\,P^*(s,T,U), \qquad (s,T,U)\in \calL.
\end{equation}
Now, since 
$$\Phi^{-1}(\calR)=\big\{(s,T,U)\colon  |s|<1,\, T<1,\,U<1,\,\frac T{1-s}+\frac U{1+s}\ge1\big\}=:\calR^*,
$$ 
\eqref{A3} is equivalent to the statement
$$
P^*(s,T,U)\simeq1,\qquad (s,T,U)\in \calR^*.
$$
To prove this, we shall check that $P^*$ extends to a continuous function on $\overline{\calR^*}$ that does not vanish on the boundary $\partial\calR^*$. 
This will be enough for reaching the above asymptotic equivalence and thus \eqref{A3}, since we can invoke the argument already applied in the case $X+Y<1$; 
see the end of Section \ref{ssec:first}. We visualize the shape of $\calR^*$ and $\partial\calR^*$ on the following two figures.On Figure \ref{fig:1} we 
present typical sections of $\calR^*$ with planes at fixed levels $s$ (for $s=0$ the corresponding section becomes a triangle). Then on Figure \ref{fig:2} 
we approximate the shape of $\calR^*$.
\begin{figure}[ht]
\begin{tikzpicture}
[scale=0.5, axis/.style={very thick, ->, >=stealth'}]
\fill[black!20] (0,3.2) -- (0,4) -- (4,4) -- (4,0.8) -- cycle;

\draw[axis] (0,0)  -- (8,0) ;
\draw[axis] (0,0) -- (0,8) ;

\node[label={[below]$1$}] at (4,-0.3){} ;
\node[label={[left]$1$}] at (-0.3,4){} ;

\node[label={[below]$T$}] at (8,-0.3){} ;
\node[label={[left]$U$}] at (-0.15,7.5){} ;

\filldraw[black] (0,3.15) circle (3pt) ;
\node[label={[left]$1+s$}] at (-0.3,3){} ;

\filldraw[black] (0,0.8) circle (3pt) ;
\node[label={[left]$-s\frac{1+s}{1-s}$}] at (-0.3,0.7){} ;

\node[label={$-1<s<0$}] at (5,5){} ;

\draw  (0,4)--(4,4);

\draw  (4,0)--(4,4);

\draw  (0,3.2)--(4,0.8);

\draw[dotted] (0,0.8) -- (4,0.8);


\fill[black!20] (13.6,0) -- (12.7,4) -- (16,4) -- (16,0) -- cycle;

\draw[axis] (12,0)  -- (20,0) ;
\draw[axis] (12,0) -- (12,8) ;

\node[label={[below]$1$}] at (16,-0.3){} ;
\node[label={[left]$1$}] at (11.7,4){} ;

\node[label={[below]$T$}] at (20,-0.3){} ;
\node[label={[left]$U$}] at (11.85,7.5){} ;

\filldraw[black] (13.6,0) circle (3pt) ;
\node[label={[below]$1-s$}] at (13.8,-0.3){} ;

\filldraw[black] (12.7,0) circle (3pt);
\node[label={[below]$s\frac{1-s}{1+s}$}] at (12.2,-0.3){} ;

\node[label={$0<s<1$}] at (17,5){} ;

\draw  (12,4)--(16,4);

\draw  (16,0)--(16,4);

\draw  (13.6,0)--(12.7,4);

\draw[dotted] (12.7,0) -- (12.7,4);

\end{tikzpicture}
\caption{Typical sections of $\calR^*$ with planes  at fixed levels $-1<s<0$ and $0<s<1$.}

\label{fig:1}
\end{figure}
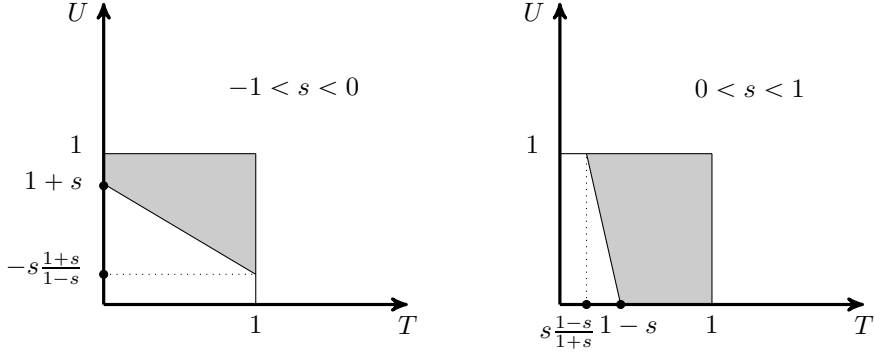

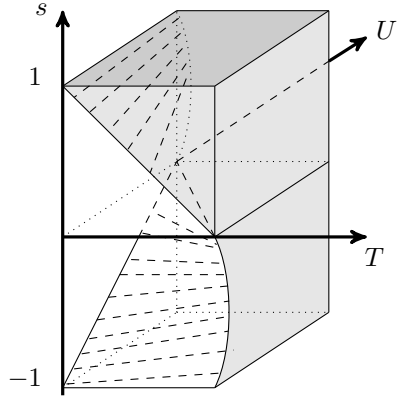
\begin{figure}[ht]
\begin{tikzpicture}
[scale=0.5, axis/.style={very thick, ->, >=stealth'}] 

\fill[black!20] (0,4) -- (4,4) -- (7,6) -- (3,6) -- cycle;
\fill[black!10] (4,0) -- (7,2) -- (7,6) -- (4,4) -- cycle;
\fill[black!10] (4,0) -- (4,4) -- (0,4) -- cycle;

\fill[black!10] (4,0) .. controls (4.5,-1) and (4.5,-3) .. (4,-4) -- (7,-2) -- (7,2)-- cycle;

\draw[axis] (0,0)  -- (8,0) ;
\draw[axis] (0,-4.2) -- (0,6) ;
\draw[axis] (7,4.66) -- (8,5.30);

\node[label={[left]$1$}] at (-0.3,4){} ;
\node[label={[left]$-1$}] at (-0.3,-4){} ;

\node[label={[below]$T$}] at (8.2,-0.3){} ;
\node[label={[left]$s$}] at (-0.15,5.8){} ;
\node[label={[right]$U$}] at (8,5.25){} ;

\draw  (0,-4)--(4,-4);
\draw  (0,4)--(4,4);
\draw  (3,6)--(7,6);
\draw[dotted]   (3,2)--(7,2);
\draw[dotted]   (3,-2)--(7,-2);

\draw  (4,0)--(4,4);
\draw  (7,-2)--(7,6);
\draw[dotted]  (3,-2)--(3,6);

\draw  (0,4)--(3,6);
\draw  (4,4)--(7,6);
\draw  (4,0)--(7,2);
\draw  (4,-4)--(7,-2);

\draw  (0,4)--(4,0);
\draw[dashed] (4,0) -- (3,2);

\draw[dotted] (0,0) -- (3,2);
\draw[dotted] (0,-4) -- (3,-2);
\draw[dashed] (3,2) -- (7,4.66);

\draw  (0,-4)--(2,0);
\draw[dashed] (2,0) -- (3,2);

\draw[dashed] (3.05, 5.8) -- (0.3,3.7);
\draw[dashed] (3.1, 5.4) -- (0.7,3.3);
\draw[dashed] (3.15, 5.0) -- (1.2,2.8);
\draw[dashed] (3.25, 4.6) -- (1.7,2.3);
\draw[dashed] (3.25, 4.2) -- (2.3,1.7);

\draw[dashed] (0.1, -3.9) -- (4.3,-3.6);
\draw[dashed] (0.3, -3.5) -- (4.5,-3.0);
\draw[dashed] (0.5, -3.1) -- (4.35,-2.5);
\draw[dashed] (0.7, -2.7) -- (4.43,-2.1);
\draw[dashed] (0.9, -2.2) -- (4.38,-1.7);%
\draw[dashed] (1.2, -1.6) -- (4.36,-1.3);
\draw[dashed] (1.5, -1.0) -- (4.28,-1.0);
\draw[dashed] (1.8, -0.6) -- (4.2,-0.7);
\draw[dashed] (2.1, 0.1) -- (4.1,-0.3);
\draw[dashed] (2.5, 0.6) -- (4.05,-0.2);

\draw (4,0) .. controls (4.5,-1) and (4.5,-3) .. (4,-4);
\draw[dotted] (3,6) .. controls (3.5,5) and (3.5,3) .. (3,2);

\end{tikzpicture}
\caption{The region $\calR^*$ in $\R^3$ with a twisted strip as a part of  $\partial\calR^*$.}

\label{fig:2}
\end{figure}

Extending $P^*$ we shall use \eqref{A5} together with the fact that $S^*$ was extended to a continuous function on $\overline{\Omega^*}$. In fact, on this part of $\partial \calR^*$, where $TU\neq0$, we simply use  \eqref{A5} to define  $P^*=S^*/4TU$. Moreover, on the considered part of $\partial \calR^*$, 
where $TU\neq0$, $P^*$ is positive since $S^*$ is positive there. It remains to consider these parts of $\partial \calR^*$, where $T=0$ or $U=0$.
We consider only the case $U=0$, since the complementary case $T=0$ is symmetric. The points of $\partial \calR^*$, where  $U=0$ are of two kinds:

\noindent Case $(a)$: points of the closed triangle with vertices at $(1,0,0)$, $(1,1,0)$,  and $(0,1,0)$;

\noindent Case $(b)$: points of the closed segment with endpoins $(-1,0,0)$  and $(-1,1,0)$.

 All the points of the triangle from Case $(a)$  with $0\le s<1$ are negligible and this claim will be explained in a moment. So we are reduced to 

\noindent Case $(a)'$: points of the closed segment with endpoins $(1,0,0)$  and $(1,1,0)$.

But then we use \eqref{big2} to define $P^*(1,T,0)$, $T>0$, as the limit
$$
P^*(1,T,0)=\lim_{U\to0^+}\frac{S^*(1,T,U)}{4TU}=\frac{1-e^{-T}}{4T},
$$
and consequently, for $T=0$,
$$
P^*(1,0,0)=\lim_{T\to0^+}\frac{P^*(1,T,0)}{4T}=\frac14.
$$
Case $(b)$ is treated in a completely analogous way. 

We now return to our last claim.  We shall justify (slightly more than necessary) that $U=0$ is a simple zero of the function $\Psi(U)=\Psi_{s,T}(U)=S^*(s,T,U)$ 
with $|s|<1$ and $T>0$ fixed. Indeed, from the explicit form of $S^*$ we have
$$
\partial_U\Psi=\frac2{1+s}e^{-\frac{2U}{1+s}}\Big(e^{-\frac{2T}{1-s}}-e^{-T} \Big)+e^{-U}\Big(  1-e^{-\frac{2T}{1-s}}\Big)
$$
and hence
$$
\partial_U\Psi(0)= \frac{1-s}{1+s}e^{-\frac{2T}{1-s}}+1-\frac2{1+s}e^{-T}:=G_s(T).
$$
But $G_s(0)=0$ and $G_s$ is easily seen to be increasing on $(0,\infty)$, so $G_s(T)>0$ for $T>0$. This finishes justification of our claim.

\vskip 0.4cm
\noindent \textbf{Case  (B)}:  $(1-s)X<1,\,(1+s)Y\ge1$; our goal is $G(s,X,Y)\simeq (1-s)X$. We shall prove that
\begin{equation*}
\partial_X G(s,X,Y)\simeq 1-s,
\end{equation*}
uniformly in $(s,X,Y)$ from the region
$$
\calR=\big\{(s,X,Y)\colon |s|<1,\, X+Y\ge1,\, X<\frac1{1-s},\,Y\ge \frac1{1+s}\big\}.
$$ 
Since $G(s,0,Y)=0$, this is sufficient to obtain the required goal. 

We have
$$
\partial_X G(s,X,Y)=\frac{\sinh Y}{e^{sY}\sinh^2(X+Y)}\Big[e^{s(X+Y)}\big(\cosh(X+Y)-s\sinh(X+Y)\big) -1\Big].
$$
But the expression in brackets is easily seen to equal
$$
\frac12(1-s)\Big(e^{(1+s)(X+Y)}-1 +\frac{1+s}{1-s} \big(e^{-(1-s)(X+Y)}-1 \big) \Big),
$$
hence, taking into account that $\sinh^2(X+Y)\simeq e^{2(X+Y)}$ (for $X+Y\ge1$), it remains to show that
\begin{equation}\label{bond}
\frac{\sinh Y}{e^{sY}e^{2(X+Y)}}\Big(e^{(1+s)(X+Y)}-1 -\frac{1+s}{1-s} \big(1-e^{-(1-s)(X+Y)}\big) \Big)\simeq 1, \qquad (s,X,Y)\in \calR.
\end{equation}
We now rewrite the left-hand side of the above to the form
$$
\frac{\sinh Y \big(e^{(1+s)(X+Y)}-1\big)}{e^{sY}e^{2(X+Y)}}\times \Big[1 -\frac{1+s}{1-s}\frac{1-e^{-(1-s)(X+Y)}}{e^{(1+s)(X+Y)}-1} \Big]
$$
and claim, which is enough for \eqref{bond}, that 
\begin{equation}\label{bond2}
L:= \frac{\sinh Y \big(e^{(1+s)(X+Y)}-1\big)}{e^{sY}e^{2(X+Y)}}  \simeq 1, \qquad (s,X,Y)\in \calR,
\end{equation}
and  
\begin{equation}\label{bond3}
R:=1 -\frac{1+s}{1-s}\frac{1-e^{-(1-s)(X+Y)}}{e^{(1+s)(X+Y)}-1}\simeq 1, \qquad|s|<1,\,\, X+Y\ge 1. 
\end{equation}

For \eqref{bond2} we first remark that $(1+s)Y\ge 1$ implies $(1+s)(X+Y)> 1$ and so $e^{(1+s)(X+Y)}-1\simeq e^{(1+s)(X+Y)}$. Thus
$$
L\simeq  \frac{\sinh Y}{e^{sY}}e^{-(1-s)(X+Y)}=\frac{\sinh Y}{e^{sY}}e^{-(1-s)X}e^{-(1-s)Y}\simeq \frac{\sinh Y}{e^{sY}}e^{-(1-s)Y},
$$
the last step due to $(1-s)X<1$. Now, for $Y>1$, obviously $L\simeq1$.  For $Y\le 1$, due to $Y\ge\frac1{1+s}$, we have $s>0$ and $\frac12< Y<1$,
hence $\sinh Y\simeq1$, $e^{sY}\simeq1$, and $e^{-(1-s)Y}\simeq1$, and  $L\simeq1$ again follows.

For \eqref{bond3}, with the substitution $T:=(1-s)(X+Y)$, $U:=(1+s)(X+Y)$, so that $T+U\ge2$, and setting $f(T):=\frac{1-e^{-T}}T$, $g(U):=\frac{e^U-1}U$, 
the proof reduces to checking that for some $0<\ve<1$ it holds 
$$
\frac{f(T)}{g(U)}<\ve, \qquad T>0,\,U>0,\,\, T+U\ge2.
$$
But $f$ is decreasing on $(0,\infty)$ and  $g$ is increasing on $(0,\infty)$, hence for any $U\ge2$ and $T>0$ we have
$$
\frac{f(T)}{g(U)}<\frac{f(0^+)}{g(2)}=\frac2{e^2-1}=:\ve_1<1.
$$
For $0<U<2$ and $T\ge2-U$, since $f$ and $g$ are continuous on $[0,2]$ and $g>0$ on $[0,2]$, we have  
$$ 
\frac{f(T)}{g(U)}\le\frac{f(2-U)}{g(U)}=\sup_{[0,2]}\frac{f(2-\cdot)}{g(\cdot)}= :\ve_2<1.
$$
Checking the last inequality is routine. (For instance, $g(U)$ can be replaced by $\frac2{2-U}$ which is smaller and $f(2-U)$ can be replaced by
$1-\frac{1+e^{-2}}4(2-U)$ which is bigger.) Now we can take $\ve=\max(\ve_1,\ve_2)$.

\vskip 0.4cm
\noindent \textbf{Case  (C)}:  $(1-s)X\ge1,\,(1+s)Y<1$. As we already mentioned,  \eqref{lala2} (C) follows from \eqref{lala2} (B) (and vice versa) 
due to $G(s,Y,X)=G(-s,X,Y)$ and appropriate relations between the $\calR$-regions. 

\vskip 0.4cm
\noindent  \textbf{Case (D)}: $(1-s)X\ge1,\,(1+s)Y\ge1$. Our goal is
$$
1-\frac{\exp(sX)\sinh Y+ \exp(-sY)\sinh X}{\sinh(X+Y)}\simeq 1.
$$
The upper bound, by 1,  is obvious and for the lower bound we need  the estimate
\begin{equation*}
F_s(X,Y):=\frac{\exp(sX)\sinh Y+ \exp(-sY)\sinh X}{\sinh(X+Y)}<\varepsilon,
\end{equation*}
for some $0<\varepsilon<1$, uniformly in $$\calR:=\big\{(s,X,Y)\colon |s|<1, X+Y\ge1, X\ge\frac1{1-s}, Y\ge\frac1{1+s}\big\}.$$ In fact, we shall check that 
\begin{equation}\label{bound2}
F_s(X,Y)\le F_s\Big(\frac1{1-s},\frac1{1+s}\Big)<\frac1{\cosh1}, \qquad (s,X,Y)\in\calR.
\end{equation}
But $F_s(X,Y)=F_{-s}(Y,X)$ and the definition of $\calR$ allow us to restrict $s$ to $[0,1)$, and we do this. 

Now, for the first part of \eqref{bound2} we first claim that for $s$ and $Y$ given, $F_s(X,Y)$ is decreasing as a function of $X$. Indeed, a routine calculation shows that 
$$
\partial_XF_s(X,Y)=\frac{\sinh Y}{e^{sY}}\Big[1-e^{s(X+Y)}\big(\cosh(X+Y)-s\sinh(X+Y)\big)  \Big]\frac1{\sinh^2(X+Y)}.
$$
Since for $s\le1$ the function $f_s(T):=e^T(\cosh T-s\sinh T)$ is increasing on $[0,\infty)$, and for $0\le s<1$, 
$$
f\Big(\frac{2s}{1-s^2}\Big)=e^{\frac{2s}{1-s^2}}\big(\cosh \frac{2s}{1-s^2}-s\sinh \frac{2s}{1-s^2}\big)
=\frac{1-s}2e^{\frac{2}{1-s}}+\frac{1+s}2e^{\frac{2}{1+s}}>\frac{1-s}2e^{\frac{2}{1-s}}> 1,
$$
it follows that (notice that $X+Y\ge \frac{2s}{1-s^2}$ for $(s,X,Y)\in\calR$)
$$
1-e^{s(X+Y)}\big(\cosh(X+Y)-s\sinh(X+Y)<0.
$$
Thus we completed verification of our claim. Since $F_s(X,Y)=F_{-s}(Y,X)$, we also infer that given $s$ and $X$, $F_s(X,Y)$ is decreasing as a function of $Y$, and this finishes checking the first inequality in \eqref{bound2}. 

For  the second inequality in \eqref{bound2}, a short calculation shows that
$$
F_s\Big(\frac1{1-s},\frac1{1+s}\Big)=\frac1{e}\Big(1+2\frac{\sinh\frac{1}{1-s}\sinh\frac{1}{1+s}}{\sinh\big(\frac{1}{1-s} + \frac{1}{1+s} \big) }\Big).
$$
To finish it suffices to check that
$$
\frac {\sinh\big(\frac{1}{1-s} + \frac{1}{1+s}\big)} {\sinh\frac{1}{1-s}\sinh\frac{1}{1+s}}=\coth \frac{1}{1+s}+\coth \frac{1}{1-s}  \ge2\coth1
$$
for $s\in[0,1)$. But as an easy calculation shows, the latter expression  increases as a function on $[0,1)$ (the fact that $\frac{\sinh u}u$  
increases on $(0,\infty)$ helps), and so the minimal value $2\coth1$ is attained for $s=0$.

\section{Appendix} \label{sec:app}

\noindent (A). We first describe the harmonic profile of $C_+^{(m)}$, $m\ge1$. The case of general cones is discussed, for instance, in \cite{BS} (see also  
\cite[Section 4.4.2]{MS} for additional comments). Recall that in polar coordinates 
$$
C_+^{(m)}=
\begin{cases}
\{\rho e^{i\theta}\colon \rho>0,\,0<\theta<\frac{\pi}m\}, \qquad m\,\,\, {\rm even},\\
\{\rho e^{i\theta}\colon \rho>0,\,|\theta|<\frac{\pi}{2m}\}, \qquad m\,\,\, {\rm odd}, 
\end{cases}
$$
and the harmonic profile of $C_+^{(m)}$ (unique up to a positive multiplicative constant), i.e. harmonic and positive in $C^{(m)}_+$, continuous on the closure 
of $C^{(m)}_+$, and vanishing on the boundary $\partial C^{(m)}_+$, is
$$
h_m(x)=\rho^m
\begin{cases}
\sin m\theta, \qquad m\,\,\, {\rm even},\\
\cos m\theta, \qquad m\,\,\, {\rm odd}; 
\end{cases}
$$
here $x=\rho e^{i\theta}$. Using the formulas for $\sin m\theta$ and $\cos m\theta$ 
gives for $m$ even
$$
h_m(x)=\sum_{j=0}^{m/2-1}(-1)^j\binom{m}{2j+1}x_1^{m-2j-1}x_2^{2j+1},
$$
and for $m$ odd  
$$
h_m(x)=\sum_{j=0}^{(m-1)/2}(-1)^j\binom{m}{2j}x_1^{m-2j}x_2^{2j}.
$$

It is seen that $h_m$ is a polynomial  of degree $m$ of two variables, consisting of $\lfloor\frac{m+1}2\rfloor$ monomials, each of degree $m$.
Consequently, we have the homogeneity property 
$$
h_m(x_1,x_2)=x_1^m h_m\big(1,\frac{x_2}{x_1}\big)=x_2^m h_m\big(\frac{x_1}{x_2},1\big)
$$
for $x_1\neq0$ and $x_2\neq0$, respectively. From the construction it follows that as a function on $\R^2$, $h_m$ for $m$ even vanishes on the rays 
$\rho e^{i\frac{\pi}m j}$ emanating from the origin, $\rho\ge0$, $j=0,1,\ldots, 2m-1$, and only there; for $m$ odd analogous statement concerns  the 
rays $\rho e^{i\frac{\pi}m(j+1/2)}$. 

Hence, for $m$ even, the polynomial of one variable and degree $m-1$
$$
P_{m}(u):=h_m(u,1)=\sum_{j=0}^{m/2-1}(-1)^j\binom{m}{2j+1}u^{m-2j-1}
$$
possesses exactly $m-1$ simple roots
$$
0\,\,\,{\rm and}\,\,\,\pm u_{j,m}:=\pm 1/\tan\big(\frac{\pi}m j\big), \qquad j=1,2,\ldots,\frac m2-1.
$$
For instance, $u_{1,4}=1$. Similarly, for $m$ odd, the polynomial of one variable and degree $m$ 
$$
P_{m}(u):=h_m(u,1)=\sum_{j=0}^{\frac{m-1}2}(-1)^j\binom{m}{2j}u^{m-2j}
$$
possesses exactly $m$ simple roots
$$
0\,\,\,{\rm and}\,\,\,\pm u_{j,m}:=\pm 1/\tan\Big(\frac{\pi}m(j+\frac12)\Big), \qquad j=0,1,\ldots,\frac{m-3}2.
$$
For instance, $u_{0,3}=\sqrt3$. Therefore,  for $m$ even, 
$$
P_{m}(u)=m\,u\prod_{j=1}^{\frac m2-1}(u-u_{j,m})(u+u_{j,m}),
$$
and consequently,
\begin{equation}\label{ee}
h_m(x_1,x_2)=mx_1x_2\prod_{j=1}^{\frac m2-1}(x_1-u_{j,m}x_2)(x_1+u_{j,m}x_2);
\end{equation}
by convention, for $m=2$ the last two products and are understood as 1. Similarly, for $m$ odd, 
$$
P_{m}(u)=u\prod_{j=0}^{\frac{m-3}2}(u-u_{j,m})(u+u_{j,m}),
$$
(the convention analogous to this above applies here and below  for $m=1$) and consequently,
\begin{equation}\label{eee}
h_m(x_1,x_2)=x_1\prod_{j=0}^{\frac{m-3}2}(x_1-u_{j,m}x_2)(x_1+u_{j,m}x_2).
\end{equation}
Observe that the sequences
$$
\{u_{j,m}\}_{j=1}^{\frac{m-2}2}\quad {\rm and} \quad \{u_{j,m}\}_{j=0}^{\frac{m-3}2}
$$
for $m$ even and odd, respectively, are decreasing. This property, together with the fact that $\frac1{u_{1,m}}=\tan\frac{\pi}m$ and 
$\frac1{u_{0,m}}=\tan\frac{\pi}{2m}$ for $m$ even and odd, respectively, allows to confirm positivity of $h_m$ on $C^{(m)}_+$ directly from the formulas 
\eqref{ee} and \eqref{eee}. In addition, since the rays emanating from the origin, $x_2=0$ and $x_2=\tan\big(\frac{\pi}m\big) x_1$, or 
$x_2=\pm\tan\big(\frac{\pi}{2m}\big) x_1$, for $m$ even or $m$ odd, respectively, form the boundary of $C^{(m)}_+$, the fact that $h_m$ vanishes on 
$\partial C^{(m)}_+$ is also directly seen.

In Table 1 we collect the explicit formulas of $h_m$ for $1\le m\le 6$. 
{{
\begin{table*}[htbp]
\centering
\begin{tabular}{|p{0.03\textwidth}||p{0.8\textwidth}|}\hline
$m$             & $h_m(x_1,x_2)$\\ \hline\hline 
$1$ & $ x_1 $  \\ \hline
$2$  & $2x_1x_2$ \\ \hline
$3$  & $x_1(x_1-\sqrt3 x_2)(x_1+\sqrt3 x_2)$  \\ \hline 
$4$   & $4x_1x_2(x_1-x_2)(x_1+x_2)$ \\ \hline
$5$   & $x_1(x_1-\sqrt{5+2\sqrt5}x_2)(x_1+\sqrt{5+2\sqrt5}x_2)(x_1-\sqrt{5-2\sqrt5}x_2)(x_1+\sqrt{5-2\sqrt5}x_2)$ \\ \hline
$6$   & $6x_1x_2(x_1-\sqrt3 x_2)(x_1+\sqrt3 x_2)(x_1-\frac1{\sqrt3}x_2)(x_1+\frac1{\sqrt3}x_2)$ \\ \hline
\end{tabular}
\vskip.2cm
\caption{The polynomials $h_m$ for low values of $m$}
\label{tab:Table3}
\end{table*}
}}

Let $h_{j,m}$, $j=1,\dots,m$, be the polynomials of two variables and degree 1 entering the decompositions \eqref{ee} or \eqref{eee} (the order of these polynomials is not important). We state the following conjecture.

\textbf{Conjecture}. For $m\ge1$ we have uniformly in $x,\,y\in C_+^{(m)}$, and  $t>0$, 
\begin{equation}\label{hip}
p_t^{\sgn,C_+^{(m)}}(x,y)\simeq \prod_{j=1}^{m}\frac{h_{j,m}(x)h_{j,m}(y)}{h_{j,m}(x)h_{j,m}(y)+t}\,p_t^{(2)}(x-y).
\end{equation}

The conjecture is supported by our results, Proposition \ref{prop:1} taken with $d=2$, $k=1,2$, and $\eta=\textbf{1}$ (this corresponds to $m=1,2$) and
Theorems \ref{thm:1} and \ref{thm:2} (these correspond to $m=3, 4$).

Observe that for $m\ge4$ some factors of the product in \eqref{hip} are comparable and so \eqref{hip} can be rewritten in an equivalent form. 
Namely, for even $m\ge4$ and $j=2,\ldots,\frac{m-2}2$, we have 
\begin{equation}\label{hip2}
\frac{(x_1\pm u_{j,m}x_2)(y_1\pm u_{j,m}y_2)}{(x_1\pm u_{j,m}x_2)(y_1\pm u_{j,m}y_2)+t}\simeq \frac{x_1y_1}{x_1y_1+t},\qquad x,y\in C_+^{(m)},\quad t>0,
\end{equation}
and the same relation holds for $j=1$ but only with the `$+$' sign. Therefore, \eqref{hip} is for even $m\ge4$ equivalent to 
$$
p_t^{\sgn,C_+^{(m)}}(x,y)\simeq 
\Big(\frac{x_1y_1}{x_1y_1+t}\Big)^{m-2}\frac{x_2y_2}{x_2y_2+t}\frac{(x_1-u_{1,m}x_2)(y_1-u_{1,m}y_2)}{(x_1-u_{1,m}x_2)(y_1-u_{1,m}y_2)+t}\,p_t^{(2)}(x-y).
$$
Similarly, for odd $m\ge5$, \eqref{hip2} holds for $j=1,\ldots,\frac{m-3}2$. Therefore,  \eqref{hip}  is for odd $m\ge3$ equivalent to 
\begin{align*}
&p_t^{\sgn,C_+^{(m)}}(x,y)\simeq \\
&\Big(\frac{x_1y_1}{x_1y_1+t}\Big)^{m-2}\frac{(x_1-u_{0,m}x_2)(y_1-u_{0,m}y_2)}{(x_1-u_{0,m}x_2)(y_1-u_{0,m}y_2)+t}
\frac{(x_1+u_{0,m}x_2)(y_1+u_{0,m}y_2)}{(x_1+u_{0,m}x_2)(y_1+u_{0,m}y_2)+t}\,p_t^{(2)}(x-y).
\end{align*}

Finally we observe that the above two formulas can be unified into one. Let $L_{1,m}$ and $L_{2,m}$ be the boundary half-lines of $C_+^{(m)}$, i.e. 
$\{(s,0)\colon s\ge0\}$ and $\{(s,\frac1{u_{1,m}}s)\colon s\ge0\}$ for $m$ even, and $\{(s,\pm\frac1{u_{0,m}}s)\colon s\ge0\}$ for $m$ odd. 
Let  $\delta_{L_{j,m}}(x)$ and $\delta_{\rm v}(x)$ denote the distance of $x\in C_+^{(m)}$ to $L_{j,m}$, $j=1,2$, and to the vertex of $C_+^{(m)}$, respectively. In this new \textit{entourage}, applying $\frac{A}{A+t}\simeq 1\wedge \frac At$, $A,t>0$, shows that  for $m\ge2$  \eqref{hip} is equivalent to
\begin{equation}\label{hip22}
p_t^{\sgn,C_+^{(m)}}(x,y)\simeq\Big(1\wedge\frac{\delta_{\rm v}(x)\delta_{\rm v}(y)}{t}\Big)^{m-2}\Big(1\wedge \frac{\delta_{L_{1,m}}(x)\delta_{L_{1,m}}(y)}{t}\Big)
\Big(1\wedge \frac{\delta_{L_{2,m}}(x)\delta_{L_{2,m}}(y)}{t}\Big)\,p_t^{(2)}(x-y).
\end{equation} 

\vskip0.4cm
(B). As an application of \eqref{hip22} we now state a genuinely sharp estimate for the Dirichlet heat kernel on the domain which is an intersection of 
two half-spaces in $\R^d$, $d\ge3$, with the angle $\theta=\frac{\pi}m$, $m=3,4$, between the hyperplanes supporting them. This is in connection with 
\cite[Theorem 3.1]{Ser}, where upper bounds for the Dirichlet heat kernels on intersections of two half-spaces with obtuse angle between their supporting hyperplanes were obtained. Obviously, \eqref{hip22} is conjectured, nevertheless it is convenient to work with general $m$, although the conclusions 
are valid only for $m=2,3,4$ (the case of $m=2$ will follow by Proposition \ref{prop:1} taken with $d\ge3$ and $\eta=\textbf{1}$).

The domain we consider is a Weyl chamber. With identification $\R^d=\R^{d-2}\times\R^2$ and up to rotation, this is the product of $\R^{d-2}$ 
and $C_+^{(m)}\subset\R^2$. More precisely, this framework is related to the root system $R$ with orthogonal partition $R=R_1\sqcup R_{2,m}$, where 
$R_1=\{\pm e_1,\ldots,\pm e_{d-2},0,0\}$ and 
$$
R_{2,m}=\Big\{\Big(0,\ldots,0, \pm \cos\big(\frac{2\pi}m j\big), \pm \sin\big(\frac{2\pi}m j\big)\Big)\colon j=0,\ldots,2d-1 \Big\}.
$$ 
Then the corresponding Weyl chamber is $H_m=H_{1,m}^+\cap H_{2,m}^+$, where, for $k=1,2$, 
$$
H_{k,m}^+=\{x\in\R^d\colon \langle x,v_{k,m}\rangle>0\},
$$
and $v_{1,m}$, $v_{2,m}$, are the unit inside vectors orthogonal to the facets of $H_{1,m}^+$ and $H_{2,m}^+$, respectively. More precisely, 
for $m$ even, $v_{1,m}=(0,\ldots,0,0,1)$, $v_{2,m}=(0,\ldots,0,\sin(\pi/m),-\cos(\pi/m))$, and, for $m$ odd, 
$v_{1,m}=(0,\ldots,0,\sin(\pi/2m),\cos(\pi/2m))$, $v_{2,m}=(0,\ldots,0,\sin(\pi/2m),-\cos(\pi/2m))$. In both cases,
$$
\angle(H_{1,m}^+, H_{2,m}^+):=\pi-\angle(v_{1,m},v_{2,m})=\frac{\pi}{m}.
$$

The Dirichlet heat kernel for $H_m$ is (see, for instance, \cite[Proposition 2.4]{S2}, stated in a slightly more general setting)
$$
p_t^{H_m}(x,y)=p_t^{(d-2)}(x'-y')p_t^{C_+^{}(m)}(x'',y''),
$$
where $x=(x',x'')\in \R^{d-2}\times\R^2$, and similarly for $y\in\R^d$.

Now, let $S_{k,m}$, $k=1,2$, denote the two flat parts of the boundary of $H_m$, $S_{k,m}=\overline{H_m}\cap \langle v_{k,m}\rangle^\bot$, that correspond 
to $L_{k,m}$, in the sense that for $x=(x',x'')\in \partial H_m$ we have $x\in S_{k,m}$ if and only if $x''\in L_{k,m}$. This also means that  
$x=(x',x'')\in \partial H_m$ belongs to the edge $E_m:=S_{1,m}\cap S_{2,m}$ if and only if $x''$ is the vertex of $C_+^{(m)}$. Next, let 
$\delta_{S_{k,m}}(x)$ and $\delta_{\rm E}(x)$ denote the distance of $x\in H_m$ to $S_{k,m}$, $k=1,2$, and to the edge $E_m$, respectively. 
Observe that for $x=(x',x'')\in H_m$ we have $\delta_{S_{k,m}}(x)=\delta_{L_{k,m}}(x'')$ and $\delta_{\rm E}(x)=\delta_{\rm v}(x'')$. Thus, \eqref{hip22} 
transforms onto
\begin{equation}\label{hip222}
p_t^{H_m}(x,y)\simeq\Big(1\wedge\frac{\delta_{\rm E}(x)\delta_{\rm E}(y)}{t}\Big)^{m-2}\Big(1\wedge \frac{\delta_{S_{1,m}}(x)\delta_{S_{1,m}}(y)}{t}\Big)
\Big(1\wedge \frac{\delta_{S_{1,m}}(x)\delta_{S_{1,m}}(y)}{t}\Big)\,p_t^{(d)}(x-y),
\end{equation}
uniformly in $x,y\in H_m$ and $t>0$.

\vskip0.4cm
(C). The aim of this part is to show that the conjectured estimate \eqref{hip} is compatible with estimates proved in \cite{GSC} in 
the general framework of an arbitrary inner uniform domain, in the sense that \eqref{hip} is stronger. In particular, in the considered settings of 
$C^{(3)}_+$ and  $C^{(4)}_+$, the estimates \eqref{sgn} and \eqref{sgn1} are stronger than those that follow from \cite{GSC}. 

Rather than to compare \eqref{hip} with \eqref{convex2} taken for $U=C^{(m)}_+$ (so that $d_U(x,y)=\|x-y\|$), we prefer to use a more explicit variant 
of \eqref{convex2} specialized to \textit{special Lipschitz domains} (sLd for short, sometimes called simply \textit{Lipschitz domains}), which is a 
subclass of \textit{strongly Lipschitz domains}; this variant is taken from \cite[Corollary 6.14]{GSC}. We also prefer to furnish first a detailed 
argument in the specific case of $m=4$ and then to generalize it to an arbitrary $m$.

An sLd is, up to  rotation, the domain of the form 
$$
U_\Phi=\{x\in\Rd\colon \Phi(x')<x_d\}, \qquad x=(x',x_d),
$$ 
where $\Phi\colon \R^{d-1}\to\R$, $d\ge2$, is a Lipschitz function; plainly, $U_\Phi$ is a domain above the graph of $\Phi$. Clearly, any Weyl chamber 
in $\Rd$, $d\ge2$, being an open polyhedral cone with vertex at the origin, is an example of a convex sLd. 

Any sLd is a \textit{uniform domain} (see \cite[Proposition 6.6]{GSC}) and any convex sLd is an inner uniform domain. Thus, any Weyl chamber is an example 
of a convex unbounded uniform domain. Therefore, \cite[Corollary 6.14]{GSC} (repeated also as \cite[Corollary 6.28]{GSC}; note that in both places the 
exponent 2 is missing) adapted to the setting of $C^{(4)}_+$ says that 
\begin{equation}\label{xyz}
p_t^{\sgn,C^{(4)}_+}(x,y)\simeq \simeq t^{-d/2}\frac{h(x)h(y)}{h(x_{*,\sqrt t})h(y_{*,\sqrt t})}\, e^{-c\|x-y\|^2/t}, \qquad x,y\in C^{(4)}_+,
\end{equation}
where $z_{*,\sqrt t}=z+\sqrt t\, \vec{v}_2$ for $z\in  C^{(4)}_+$, and 
$$
\vec{v}_2=(\tau^+,\tau^-), \qquad  \tau^{\pm}=\frac12\sqrt{2\pm \sqrt2},
$$
is the unit vector generating the symmetry line of $C^{(4)}_+$, and 
$$
h(x):=\frac14 h_4(x)=x_1x_2(x_1-x_2)(x_1+x_2), \qquad x=(x_1,x_2)\in C^{(4)}_+,
$$
is the harmonic profile of $C^{(4)}_+$. 

Originally, in \cite[Corollary 6.14]{GSC}, the  `adjusted' point $z_{\sqrt t}$ was defined as $z_{\sqrt t}=z+\sqrt t\,\vec{e}_d$, and our modified 
version $z_{*,\sqrt t}$ reflects necessity of using a rotated system of coordinates in which $C^{(4)}_+$ indeed is a domain above the graph of a 
Lipschitz function. In this new system, the unit vectors $\vec v_1$ and $\vec v_2$, where $\vec v_1$ is such that $\vec v_1\bot\vec v_2$ and the basis 
$(\vec v_1,\vec v_2)$ is positively oriented, are coordinate vectors of orthogonal axes. 

Speaking more precisely, in the new coordinates $(x_1',x_2')$ we consider
$$
U_\Phi=\{x'\in\R^2\colon x_2'>(\sqrt 2+1)|x_1'|\},\qquad \Phi(x_1')=(\sqrt 2+1)|x_1'|,
$$
so that the clockwise rotation $R_0:=R(\theta_0)$ with $\theta_0=3\pi/8$ moves $\vec e_2=(0,1)$ onto $\vec v_2$ and maps $U_\Phi$ onto $C^{(4)}_+$. Recall 
that $\sin\theta_0=\tau^+$ and $\cos\theta_0=\tau^-$, so that $\tan\theta_0=\sqrt2+1$, and if $R_0(z')=z$, where $z'\in U_\Phi$, then 
$R_0(z_{\sqrt t}')=z_{*,\sqrt t}$.

To see that \eqref{xyz} is implied by \eqref{sgn} we first observe that the product of the numerators of the four fractions preceding $p_t^{(2)}(x-y)$ 
in \eqref{sgn} equals $h(x)h(y)$. Therefore, all we need are the following relations between the products of the denominators in \eqref{sgn} and \eqref{xyz} 
\begin{equation}\label{uuu}
h(x_{*,\sqrt t})h(y_{*,\sqrt t})\, e^{-\varepsilon\|x-y\|^2/t}\lesssim D(x,y,t)\lesssim h(x_{*,\sqrt t})h(y_{*,\sqrt t}), \qquad x,y\in C^{(4)}_+,\quad t>0,
\end{equation}
where $\varepsilon>0$ is arbitrarily fixed and 
$$
D(x,y,t)=(x_1y_1+t)(x_2y_2+t)\big((x_1-x_2)(y_1-y_2)+t\big)\big((x_1+x_2)(y_1+y_2)+t\big).
$$
With the aid of \eqref{uuu}, \eqref{xyz} follows from \eqref{sgn} with $c_1=\frac14$ and any $c_2<\frac14$.

To check \eqref{uuu} we first remark that by homogeneity it suffices to consider $t=1$ only. Now, recall that $x_{*,1}=(x_1+\tau^+,x_2+\tau^-)$ 
for $x=(x_1,x_2)$ and hence 
$$
h(x_{*,1})=(x_1+\tau^+)(x_2+\tau^-)\big((x_1-x_2)+(\tau^+-\tau^-)\big)\big((x_1+x_2)+(\tau^++\tau^-)\big),
$$
and analogously for $h(y_{*,1})$. Now, the right-hand side of \eqref{uuu}, with $t=1$, follows by applying the simple estimate
$$
ab+1\le \max\big(1,\frac1{\tau}\big)(a+\tau)(b+\tau), \qquad a,b>0,\quad \tau>0,
$$
to each of the four factors of $D(x,y,1)$ separately, with $\tau=\tau^+,\tau^-,\tau^+-\tau^-,\tau^+ +\tau^-$, respectively. For the left-hand side of 
\eqref{uuu} with $t=1$, for fixed $\varepsilon>0$ and  given $\tau>0$  we use \eqref{ort2} to obtain
$$
(a+\tau)(b+\tau)\lesssim (a+1)(b+1)\lesssim (ab+1)e^{\ve(a-b)^2}, \qquad a,b>0.
$$
As in the previous case we  pair appropriate two factors in $h(x_{*,1})h(y_{*,1})$, to apply \eqref{ort2} (in four steps).

We now pass to the general $m\ge3$, where in \eqref{xyz} $C^{(4)}_+$ is replaced by $C^{(m)}_+$, and $h$ by $h_m$. We first consider $m$ even. 
The unit vector generating  the symmetry line of $C^{(m)}_+$ is 
$$
\vec{v}_{2,m}=(\cos \frac{\pi}{2m},\sin \frac{\pi}{2m} )=:(\tau^+_m,\tau^-_m), 
$$
and the harmonic profile of  $C^{(m)}_+$ is given by \eqref{ee}. It is important to observe that
$$
u_{1,m}=1/\tan\frac{\pi}m< \frac{\tau^+_m}{\tau^-_m}=1/\tan\frac{\pi}{2m},
$$
hence, since $\{u_{j,m}\}_{j=1}^{\frac m2 -1}$ is decreasing, we also have $\tau^+_m-u_{j,m}\tau^-_m>0$ for $j=1,\ldots,\frac m2-1$. 

It is obvious that what remains to prove in this general case is the  variant of \eqref{uuu} for $t=1$, with $h$ replaced by $h_m$, and $D(x,y,1)$ replaced by
\begin{align*}
D_m(x,y,1)&=(x_1y_1+1)(x_2y_2+1)\times \\
&\,\,\,\,\prod_{j=1}^{\frac m2-1}\big((x_1-u_{j,m}x_2)(y_1- u_{j,m}y_2)+1\big)\big((x_1+u_{j,m}x_2)(y_1+u_{j,m}y_2)+1\big).
\end{align*}
But this goes with exactly the same arguments as in the specific case $m=4$ once we note that for $x\in C^{(m)}_+$ we have $x_{*,1}=(x_1+\tau^+_m,x_2+\tau^-_m)$ and
$$
h_m(x_{*,1})=(x_1+\tau^+_m)(x_2+\tau^-_m)
\prod_{j=1}^{\frac m2-1}\big((x_1-u_{j,m}x_2)+(\tau^+_m -u_{j,m}\tau^-_m)\big)\big((x_1+x_2u_{j,m})+(\tau^+_m +u_{j,m}\tau^-_m)\big),
$$
and analogously for $h(y_{*,1})$. 

The case of $m$ odd is conceptually simpler since then $C^{(m)}_+$ is an sLd from the perspective of the $x_2$-axis (no rotation is necessary) and 
$\vec{v}_{2,m}=(1,0)$ so that $x_{*,1}=(x_1+1,x_2)$ for $x=(x_1,x_2)$. Repeating previous reasoning, with necessary adjustments, brings the expected conclusion.  

\vskip0.4cm
\noindent (D) We now shortly comment on the issue of continuity of $P(s,X,Y)$ defined in Section \ref{ssec:second}.  It suffices to check that for any $r>1$, 
$|P_m(s,X,Y)|\le p_m(r)$ on $(-r,r)^3$, with $\sum_3^\infty p_m(r)<\infty$. Such estimate is easily seen by using crude estimates of 
$R_{m,j}(X,Y)$ and $Q_{m,j}(s)$ from the restricted range of $s,X,Y$. 

Indeed, for $m\ge3$ odd, we have $Q_{m,j}(s)\le mr^{m-3}$ and $|R_{m,j}(X,Y)|\le m r^{m-3}$ for $|s|<r$, $|X|<r$, $|Y|<r$, and $j=0,\ldots,\frac{m-3}2$. 
Hence, using \eqref{pmo}, for $(s,X,Y)\in (-r,r)^3$ we have
$$ 
|P_m(s,X,Y)|\le m^2r^{2(m-3)}\sum_{j=0}^{\frac{m-3}2}\frac{1}{\Gamma(2j)\Gamma(m-2j-2)}.
$$
But $\max(2j,m-2j-2)\ge\frac{m-1}2$ for $0\le j\le \frac{m-3}2$, hence the latter sum is bounded by $\frac{m-1}2/\Gamma(\frac{m-1}2)$ and finally, for $m$ odd,
$$
|P_m(s,X,Y)|\le  m^3r^{2(m-3)}/\Gamma\big(\frac{m-1}2\big):=p_m(r),
$$
with $\sum_{m\,\,{\rm odd}}p_m(r)<\infty$. For $m$ even, using \eqref{pme}  gives the analogous conclusion.

\end{document}